\documentclass[a4paper, 10pt, twoside, notitlepage]{amsart}

\usepackage[utf8]{inputenc}
\usepackage{color}
\usepackage{amsmath} 
\usepackage{amssymb} 
\usepackage{amsthm}
\usepackage{geometry}
\usepackage{graphicx}
\usepackage{esint}
\usepackage{subcaption}
\usepackage[colorlinks=true,linkcolor=blue]{hyperref}

\newcounter{statement}
\newtheorem{prop}[statement]{Proposition}

\newtheorem{rmk}[statement]{Remark}
\newtheorem{lem}[statement]{Lemma}
\newtheorem{cor}[statement]{Corollary}
\newtheorem{thm}[statement]{Theorem}

\newcommand {\R} {\mathbb{R}} 
\newcommand {\T} {\mathbb{T}} \newcommand {\N} {\mathbb{N}}

\newcommand {\p} {\partial}

\newcommand{\intd}{\, \mathrm{d}} 	
\newcommand{\eps}{\varepsilon}	



\def\Xint#1{\mathchoice
{\XXint\displaystyle\textstyle{#1}}%
{\XXint\textstyle\scriptstyle{#1}}%
{\XXint\scriptstyle\scriptscriptstyle{#1}}%
{\XXint\scriptscriptstyle\scriptscriptstyle{#1}}%
\!\int}
\def\XXint#1#2#3{{\setbox0=\hbox{$#1{#2#3}{\int}$ }
\vcenter{\hbox{$#2#3$ }}\kern-.58\wd0}}

\def\dashint{\Xint-}


\title[Rigidity for the Four-Well Problem]{On Rigidity for the Four-Well Problem Arising in the Cubic-to-Trigonal Phase Transformation}

\author{Angkana Rüland}
\address{Institut für Angewandte Mathematik,
Ruprecht-Karls-Universität Heidelberg,
Im Neuenheimer Feld 205,
69120 Heidelberg}
\email{angkana.rueland@uni-heidelberg.de}

\author{Theresa M. Simon}
\address{Angewandte Mathematik Münster: Institut für Analysis und Numerik,
Fachbereich Mathematik und Informatik der Universität Münster,
Orléans-Ring 10,
48149 Münster}
\email{theresa.simon@uni-muenster.de}

\begin{document}
\maketitle
\begin{abstract}
We classify all exactly stress-free solutions to the cubic-to-trigonal phase transformation within the geometrically linearized theory of elasticity, showing that only simple laminates and crossing-twin structures can occur. In particular, we prove that although this transformation is closely related to the cubic-to-orthorhombic phase transformation, all its solutions are rigid.
The argument relies on a combination of the Saint-Venant compatibility conditions together with the underlying nonlinear relations and non-convexity conditions satisfied by the strain components.
\end{abstract}

\section{Introduction}

Shape-memory alloys are materials with a thermodynamically very interesting behaviour: They undergo a diffusionless, solid-solid phase transformation in which symmetry is reduced. More precisely, a highly symmetric high temperature phase, the \emph{austenite}, transforms into a much less symmetric low temperature phase, the \emph{martensite}, upon cooling below a certain critical temperature \cite{B}. Mathematically, these materials have very successfully been described by an energy minimization \cite{BJ92} of the form 
\begin{align}
\label{eq:min}
\int\limits_{\Omega} W(\nabla u, \theta) \intd x  \rightarrow \min.
\end{align}
Here $\Omega \subset \R^3$ denotes the reference configuration, which often is chosen to be the austenite state at the critical temperature $\theta_c>0$, the deformation of the material is $u: \Omega \rightarrow \R^3$, the temperature is denoted by $\theta: \Omega \rightarrow [0,\infty)$ and $W:  \R^{3\times 3}_+ \times [0,\infty) \rightarrow [0,\infty)$ corresponds to the stored energy function. This function encodes the physical properties of the material and is assumed to be
\begin{itemize}
\item[(i)] \emph{frame indifferent}, i.e. $W(F,\theta) = W(QF,\theta)$ for all $F \in \R^{3\times 3}_{+}$ and $Q\in SO(3)$,
\item[(ii)] \emph{invariant with respect to the material symmetry}, i.e. $W(F,\theta) = W(F H, \theta)$ for $H\in \mathcal{P}_{a}$ where $\mathcal{P}_{a}$ denotes the symmetry group of the austenite phase, which we assume to strictly include the symmetry group of the martensite phase.
\end{itemize}
Here (i) can be viewed as a geometric nonlinearity, while (ii) encodes the main material nonlinearity which, for instance, reflects the transition from the highly symmetric austenite to the less symmetric martensite phase. Both structure conditions imply that the energies in \eqref{eq:min} are highly non-quasiconvex and thus give rise to a rich energy landscape. As a result, minimizing sequences can be rather intricate, which physically leads to various different microstructures.

In this note, it is our objective to study a specific phase transformation for which experimentally interesting microstructures are observed. Seeking to capture ``crossing-twin structures'' in a fully three-dimensional model (see Figure \ref{fig:laminates}, left), we focus on the \emph{cubic-to-trigonal phase transformation} in three dimensions. This deformation, for instance, arises in materials such as Zirconia or in Cu-Cd-alloys but also in the cubic-to-monoclinic transformation in CuZnAl. We refer to \cite{S97,PS69} for experimental studies, to \cite{HS00} for an investigation of special microstructures in a geometrically nonlinear context and to \cite{BD00, BD01} for mathematical relaxation results for the associated geometrically nonlinear problems.
Since the study of the minimization problem \eqref{eq:min} can be rather complex, in this note we make the following three simplifying assumptions which are common in the mathematical analysis of martensitic phase transformations: 
\begin{itemize}
\item We \emph{fix} temperature below the transition temperature, 
\item we consider only the \emph{material} nonlinearity while \emph{linearizing} the geometric nonlinearity,
\item and we study only \emph{exactly stress-free} structures.
\end{itemize} 
Instead of investigating the full minimization problem \eqref{eq:min}, we thus study the differential inclusion
\begin{align}
\label{eq:diff_incl}
e(u) \in \{e^{(1)}, e^{(2)}, e^{(3)}, e^{(4)}\} \mbox{ in } \T^3,
\end{align}
where 
\begin{align}
\label{eq:strains}
\begin{split}
e^{(1)}= \begin{pmatrix} d_1 & 1 & 1\\ 1 & d_2 & 1 \\ 1 & 1 & d_3 \end{pmatrix}, \ e^{(2)} = \begin{pmatrix} d_1 & -1 & -1\\ -1 & d_2 & 1 \\ -1 & 1 & d_3 \end{pmatrix},\\
e^{(3)} = \begin{pmatrix} d_1 & 1 & -1\\ 1 & d_2 & -1 \\ -1 & -1 & d_3 \end{pmatrix}, \  \ e^{(4)} = \begin{pmatrix} d_1 & -1 & 1\\ -1 & d_2 & -1 \\ 1 & -1 & d_3 \end{pmatrix},
\end{split}
\end{align}
and $d_1,d_2,d_3$ are material-specific constants. In order to avoid additional mathematical difficulties, we assume that the reference configuration is given by the torus $\T^3 := \T_1\times\T_2\times \T_3$, where $\T_i := [0,\lambda_i)$ for some $\lambda_i >0$ and $i \in \{1,2,3\}$.

 We observe that all the matrices in \eqref{eq:strains} are symmetrized rank-one connected, i.e. for each $i,j\in\{1,2,3,4\}$ there exist $a_{ij}\in \R^3 \setminus \{0\}, n_{ij}\in S^2$ such that
\begin{align*}
e^{(i)}-e^{(j)} = \frac{1}{2}(a_{ij}\otimes n_{ij} + n_{ij}\otimes a_{ij}).
\end{align*}
It is well-known that, as a consequence, the differential inclusion \eqref{eq:diff_incl} thus allows for so-called \emph{twin} or \emph{simple laminate} solutions, i.e. solutions $u(x) = u(n_{ij}\cdot x)$ with $n_{ij}\in S^2$ denoting the vectors from above. These are rather rigid, one-dimensional structures, which are frequently observed in experiments \cite{B}, see also Figure \ref{fig:laminates}, right. The possible pairs $(a_{ij}, n_{ij}) \in \R^3 \times S^2$ for the cubic-to-trigonal phase transformation are collected in Table \ref{tab:laminates}.

Contrary to other materials such as alloys undergoing a cubic-to-tetragonal phase transformation, simple laminates are not the only possible solutions to \eqref{eq:diff_incl}. As in the (more complex) cubic-to-orthorhombic phase transformation, also in the cubic-to-trigonal phase transformation ``crossing-twin structures'' can emerge. These are two-dimensional structures involving ``laminates within laminates'' (see Figure \ref{fig:laminates}, left). In particular, these patterns locally consist of zero-homogeneous deformations which involve specific ``corners'' which are formed by four different variants of martensite.

\begin{table}
\begin{tabular}{l|l}
\textbf{strains} & \textbf{possible normals $(a_{ij},n_{ij})$}\\
\hline
$e^{(1)}, e^{(2)}$ & $[1,0,0], [0,4,4]$\\
$e^{(1)}, e^{(3)}$ & $[0,0,1], [4,4,0]$\\
$e^{(1)}, e^{(4)}$ & $[0,1,0], [4,0,4]$\\
$e^{(2)}, e^{(3)}$ & $[0,1,0], [-4,0,4]$\\
$e^{(2)}, e^{(4)}$ & $[0,0,1], [-4,4,0]$\\
$e^{(3)}, e^{(4)}$ & $[1,0,0], [0,4,-4]$\\
\end{tabular}
\medskip
\caption{The possible normals arising in simple laminate constructions. For these the strain alternates between the two strain values given in the left column of the table.}
\label{tab:laminates}
\end{table}

\subsection{The main result}

As our main result, we classify all solutions to the differential inclusion \eqref{eq:diff_incl} and prove that in addition to the simple laminate solutions only crossing twin structures arise.

\begin{thm}
\label{thm:structure}
Let $e(u):= \frac{1}{2}(\nabla u + (\nabla u)^t)$ with $e(u): \T^3 \rightarrow \R^{3\times 3}_{sym}$ be a periodic symmetrized gradient. Assume that
\begin{align*}
e(u) \in \{e^{(1)}, e^{(2)}, e^{(3)}, e^{(4)}\}.
\end{align*}
Then the following structure result holds:
\begin{itemize}
\item[(i)] There exists $j\in\{1,2,3\}$ such that $\p_j e(u)=0$. 
\item[(ii)] Assuming that $j = 2$, there exist functions $f_1: \T_1 \rightarrow \R$ and $f_3:\T_3 \rightarrow \R $ such that $f_1, f_{3} \in \{-1,1\}$ and either $e_{13}(u)(x) = f_1(x_1)$ for a.e. $x\in \Omega$ or $e_{13}(u)(x) = f_3(x_3)$ for a.e. $x\in \Omega$.
\item[(iii)] Assuming that $e_{13}(u)(x) = f_3(x_3)$, consider $\Phi(s,t):= (t -F_3(s),s)$, where $F_3(s)$ is such that $F_3'(s) = f_3(s)$ a.e.~and $F_3(0)=0$. Then, there exists $g: \Phi^{-1}(\T^3) \rightarrow \R$, $(s,t) \mapsto g(t)$ such that
\begin{align*}
(e_{12}\circ \Phi)(s,t) = g(t), \ (e_{23}\circ \Phi)(s,t) = f_3(s) g(t).
\end{align*}
\end{itemize}
\end{thm}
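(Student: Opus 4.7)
My plan is to convert the differential inclusion into a combination of Saint-Venant compatibility conditions and pointwise algebraic constraints and exploit the interplay between the two. From \eqref{eq:strains}, the diagonal entries are constant, $e_{ii}(u)\equiv d_i$, while the three off-diagonals $a:=e_{12}(u)$, $b:=e_{13}(u)$, $c:=e_{23}(u)$ take only the values $\pm 1$ and obey the pointwise identity $abc=1$, i.e.\ $c=ab$ a.e. Because $e(u)$ is a periodic symmetrised gradient and the $e_{ii}$ are constants, the Saint-Venant conditions reduce to the distributional system
\begin{align*}
\partial_1\partial_2 a = 0, \quad \partial_1\partial_3 b = 0, \quad \partial_2\partial_3 c = 0,
\end{align*}
supplemented by the three mixed equations
\begin{align*}
\partial_1^2 c = \partial_1\partial_2 b + \partial_1\partial_3 a, \quad
\partial_2^2 b = \partial_2\partial_3 a + \partial_1\partial_2 c, \quad
\partial_3^2 a = \partial_1\partial_3 c + \partial_2\partial_3 b.
\end{align*}
All of what follows is derived from these PDE together with $a^2=b^2=c^2=1$ and $c=ab$.

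\textbf{Part (i), the main obstacle.} The pure equation $\partial_1\partial_2 a=0$ combined with $a\in\{\pm 1\}$ only yields slicewise rigidity: on each slice $\{x_3=\mathrm{const}\}$ the function $a$ decomposes as $\phi(x_1)+\psi(x_2)$, and since the decomposition is two-valued one of the summands must be constant. So $a$ on each slice depends on a single one of $x_1,x_2$; analogous statements hold for $b$ and $c$. Neither datum alone prevents the distinguished slicewise variable from varying from slice to slice, which would leave the solution genuinely three-dimensional. The plan is to rule this out by coupling the three slicewise dichotomies through the nonlinear relation $c=ab$ and the mixed Saint-Venant equations. A promising route is to work in Fourier: the pure equations translate to $\widehat{a},\widehat{b},\widehat{c}$ being supported on prescribed unions of two coordinate hyperplanes in $\Z^3$, while $c=ab$ becomes the convolution $\widehat{c}=\widehat{a}\ast\widehat{b}$, itself required to lie in the prescribed support; a case analysis of the admissible $(p,q)$ pairs with $p\in\mathrm{supp}\,\widehat{a}$, $q\in\mathrm{supp}\,\widehat{b}$, $p+q=k$ at frequencies $k$ with all three coordinates nonzero, combined with $\{\pm 1\}$-valuedness, should force an entire frequency axis to drop out and produce the invariant direction. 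A parallel geometric picture should be available on the $BV$-jump sets $J_a,J_b,J_c$: the Saint-Venant conditions constrain each normal to two coordinate planes, and $c=ab$ forces $J_c\subset J_a\cup J_b$ with matching normal data, so that a direct case analysis of the normal configurations should yield the same conclusion. This is the step I expect to require the greatest effort.

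\textbf{Part (ii).} Once (i) is established, take without loss of generality the invariant direction to be $x_2$. Then $a,b,c$ are $\{\pm 1\}$-valued functions of $(x_1,x_3)\in\T_1\times\T_3$, and from the pure block only $\partial_1\partial_3 b=0$ survives. The elementary observation used slicewise in Part (i) now applies globally to the two-dimensional $b$: it decomposes as $b=h_1(x_1)+h_3(x_3)$, and two-valuedness forces one of $h_1,h_3$ to be constant. This is precisely the dichotomy $b=f_1(x_1)$ or $b=f_3(x_3)$ asserted in (ii).

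\textbf{Part (iii).} Assume $b=f_3(x_3)$, so $c=a f_3$. In the two-dimensional setting the surviving mixed Saint-Venant equations are $\partial_1^2 c=\partial_1\partial_3 a$ and $\partial_3^2 a=\partial_1\partial_3 c$, which integrate to
\begin{align*}
\partial_1 c - \partial_3 a = \alpha(x_3), \qquad \partial_3 a - \partial_1 c = \beta(x_1)
\end{align*}
for some one-variable distributions $\alpha,\beta$. Adding gives $\alpha(x_3)+\beta(x_1)=0$, so both are the same constant (up to sign), and integrating $\partial_1 c - \partial_3 a = \mathrm{const}$ over $\T_1\times\T_3$ makes that constant vanish by periodicity. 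Hence $\partial_1 c=\partial_3 a$, and inserting $c=af_3$ yields the transport equation
\begin{align*}
\partial_3 a - f_3(x_3)\,\partial_1 a = 0.
\end{align*}
The change of variables $\Phi(s,t)=(t-F_3(s),s)$ with $F_3'=f_3$ satisfies $\partial_s=\partial_{x_3}-f_3(s)\partial_{x_1}$, so this transport equation becomes $\partial_s(a\circ\Phi)=0$; thus $a\circ\Phi$ depends only on $t$. Setting $g(t):=(a\circ\Phi)(s,t)$ and using $c=af_3$ delivers $(e_{12}\circ\Phi)(s,t)=g(t)$ and $(e_{23}\circ\Phi)(s,t)=f_3(s)\,g(t)$, as required in (iii).
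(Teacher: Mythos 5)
You correctly identify the obstruction: the pure Saint-Venant equations $\partial_1\partial_2 a=\partial_1\partial_3 b=\partial_2\partial_3 c=0$ combined with two-valuedness give only slicewise one-dimensionality, and the preferred variable could in principle drift from slice to slice. But then you offer two \emph{sketches} (``a case analysis ... should force an entire frequency axis to drop out''; ``should yield the same conclusion'') without carrying either through, and you flag this as the hardest step yourself. Neither route is straightforward: in Fourier, the constraint $a^2\equiv 1$ is not a support condition, so while $\widehat{a},\widehat{b}$ sit on unions of coordinate hyperplanes, controlling where the convolution $\widehat{a}\ast\widehat{b}$ lands requires information far beyond the support of each factor; the $BV$ jump-set picture presupposes $BV$ regularity, which is not a priori available for an arbitrary $L^\infty$ periodic strain. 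In the paper, Part (i) occupies all of Sections 2 and 3 and rests on a different and genuinely constructive chain: one builds a decomposition $e_{12}=f_{31}(x_1,x_3)+f_{32}(x_2,x_3)$ (etc.), shows the summands are three-valued in $\{-1,0,1\}$ and obey an either/or dichotomy (Lemma~\ref{lem:1D_functions}), constructs periodic Lipschitz primitives $\Psi_{ij}$ of these pieces via the second Saint-Venant block together with a mean-constancy lemma (Lemmas~\ref{lem:constant_averages}, \ref{lem:primitive}), and then breaks the residual symmetry by a transport-equation argument along carefully chosen slices combined with Lebesgue differentiation (Lemma~\ref{lem:primitive_constant}, Corollary~\ref{cor:primitive_constant}, Proposition~\ref{prop:2D}). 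It is telling that the very transport idea you deploy only in Part (iii) is, in the paper, the engine of Part (i): applied on slices $\{x_3=\mathrm{const}\}$ and at the level of the Lipschitz potentials, it is precisely what kills the extra variable.

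\textbf{Parts (ii) and (iii) are essentially correct and match the paper.} Your derivation $b=h_1(x_1)+h_3(x_3)$ plus two-valuedness forcing one summand constant is exactly the argument; and the reduction in (iii) to $\partial_1 e_{23}=\partial_3 e_{12}$, then $\partial_3 e_{12}-f_3\,\partial_1 e_{12}=0$, then constancy along the characteristics of $\Phi$, mirrors the paper's final step. One refinement worth noting: you apply the chain rule directly to the $L^\infty$ function $a=e_{12}$. The paper instead pulls back a Lipschitz potential $v$ (with $\partial_1 v=e_{12}$, $\partial_3 v=e_{23}$) so the chain rule for Lipschitz functions applies without caveat; this is cleaner, although your version can be justified distributionally because $\Phi$ is bilipschitz with unit Jacobian, and this is again exactly why the Lipschitz regularity of the $\Psi_{ij}$ earns its keep in the paper's proof of (i).

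\textbf{Verdict.} Parts (ii) and (iii) are fine and track the paper, but both are downstream of (i), which is left as a plan rather than a proof. As it stands the proposal has a genuine gap in the central step.
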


\begin{rmk}
	All cases not listed result from the symmetries of the model under permutation of the space directions, as these only permute the side lengths of the torus $\T^3$ and the constants $d_1$, $d_2$, and $d_3$, the precise values of these constants do not enter the argument. The permutations play the following roles:
	If an index $i\in \{1,2,3\}$ has been fixed, the other two can be exchanged via transposition.
	A fixed index can be transformed into a different fixed index by a full cyclic permutation.
\end{rmk}

Let us comment on this result: From a materials science point of view, it gives a complete classification of exactly stress-free solutions for the cubic-to-trigonal phase transformation in the geometrically linear framework.
Mathematically, Theorem \ref{thm:structure} provides a rigidity result for a phase transformation which leads to more complex structures than simple laminates. While a similar classification and rigidity result had been obtained in \cite{R16} for the cubic-to-orthorhombic phase transformation in three dimensions, this required strong geometric assumptions on the smallest possible scales.
These assumptions were also necessary as a result of the presence of convex integration solutions for the corresponding differential inclusion in the case of the cubic-to-orthorhombic phase transformation. In contrast, in our model, such conditions are \emph{not} needed. Due to the smaller degrees of freedom that are present (four instead of six possible strains), in fact \emph{any} exactly stress-free solution must satisfy the structural conditions and ``wild'' convex integration solutions are ruled out.

\begin{figure}
\centering
\begin{subfigure}{0.55 \textwidth}
\centering
\includegraphics[width =  \textwidth]{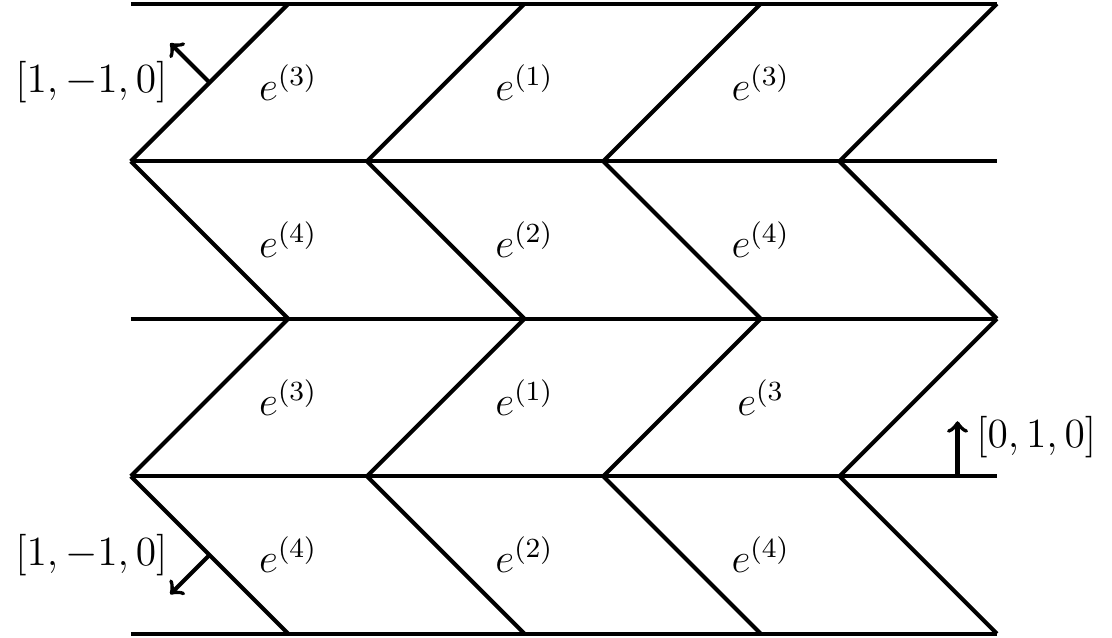}
\end{subfigure}
\hspace{0.2cm}
\begin{subfigure}{0.25 \textwidth}
\centering
\includegraphics[width =  \textwidth]{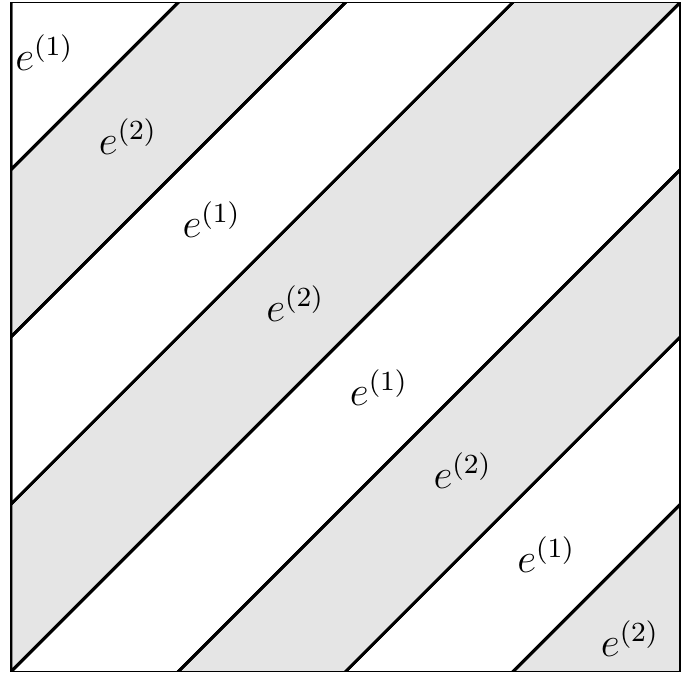}
\end{subfigure}
\caption{Schematic illustration of a crossing twin structure (left) and a simple laminate (right). Only very specific twins can be used to form crossing twin structures. These are obtained as a consequence of the compatibility conditions satisfied by the strain equations.}
\label{fig:laminates}
\end{figure}

\subsection{Relation to the cubic-to-orthorhombic phase transformation}
Due to the outlined rather different behaviour (in terms of rigidity and flexibility) of stress-free solutions of the cubic-to-orthorhombic and the cubic-to-trigonal phase transformation, we explain the algebraic relation between these two transformations:
To this end, we recall that for the cubic-to-orthorhombic phase transformation, the exactly stress-free setting in the geometrically linearized situation corresponds to the differential inclusion
\begin{align*}
e( u) \in \{e^{(1)},\dots, e^{(6)}\},
\end{align*}
with 
\begin{align*}
e^{(1)}&:= \begin{pmatrix} 1 & \delta & 0 \\ \delta & 1 & 0\\ 0 & 0 & -2 \end{pmatrix}, \ 
e^{(2)}:= \begin{pmatrix} 1 & -\delta & 0 \\ -\delta & 1 & 0\\ 0 & 0 & -2 \end{pmatrix}, \
e^{(3)}:= \begin{pmatrix} 1 & 0 & \delta \\ 0 & -2 & 0\\ \delta & 0 & 1 \end{pmatrix}, \\ 
e^{(4)}&:= \begin{pmatrix} 1 & 0 & -\delta \\ 0 & -2 & 0\\ -\delta & 0 & 1 \end{pmatrix},\
e^{(5)}:= \begin{pmatrix} -2 & 0 & 0 \\ 0 & 1 & \delta\\ 0 & \delta & 1 \end{pmatrix}, \ 
e^{(6)}:= \begin{pmatrix} -2 & 0 & 0 \\ 0 & 1 & -\delta\\ 0 & -\delta & 1 \end{pmatrix}.
\end{align*}
Here $\delta>0$ is a material dependent parameter.
If now one assumes that a microstructure only involves the infinitesimal strains $\{e^{(1)},\dots,e^{(4)}\}$ and if one carries out the change of coordinates $x\mapsto \hat{x}:=C^{-t}x$, $u\mapsto \hat{u}:= C u$ with

\begin{align*}
C:= 
\begin{pmatrix} \frac{1}{\sqrt{3}} & 0 & 0 \\ 0 & \frac{\sqrt{3}}{\sqrt{2} \delta} & 0 \\ 0 & 0 &\frac{1}{\sqrt{3}} \end{pmatrix} \cdot \begin{pmatrix} 0 & 1 & 1\\ \sqrt{2} & 0 & 0 \\ 0 & 1 & -1 \end{pmatrix},
\end{align*}
using that the (infinitesimal) strain transforms according to $e(\hat{u}) = C e(u) C^t$, one exactly arrives at the differential inclusion \eqref{eq:diff_incl} for the cubic-to-trigonal phase transformation with the parameters $d_1 = - \frac{1}{3}$, $d_2 = \frac{3}{\delta^2}$, $d_3 = - \frac{1}{3}$.
This shows that the differential inclusion for the cubic-to-trigonal phase transformation indeed corresponds to a subset of the differential inclusion for the cubic-to-orthorhombic phase transformation. Due to the fewer degrees of freedom, in contrast to the full cubic-to-orthorhombic phase transformation, it however displays strong rigidity properties.

\subsection{Main ideas}
The arguments for the proof of Theorem \ref{thm:structure} rely on a combination of the linear compatibility conditions for strains in the form of the Saint-Venant equations and the nonlinear constraints in our model. More precisely, the Saint-Venant conditions imply structural conditions on the possible space dependences of the strains.
Furthermore, the full classification result requires a breaking of symmetries that can only be deduced in combination with the non-convexity of the problem, i.e., the fact that for all $i,j \in \{1,2,3\}$ we have that $e_{ij}( u)$ attains at most three possible values and the nonlinear relation $e_{23}-e_{12}e_{13}=0$.
For a simplified model with only two-dimensional dependences similar arguments had earlier been considered in \cite{R16}. However, in contrast to \cite{R16}, in the present setting we do \emph{not} need to make use of the additional structural condition of two-dimensionality. Using restrictions to carefully chosen planes, as the key part of our argument, we in fact \emph{infer} the two-dimensionality of the strains and then combine this with the ideas from \cite{R16}.

\subsection{Relation to the literature}

In the study of minimization problems of the type \eqref{eq:min} a common first step consists of the analysis of exactly stress-free structures. This is investigated for particular low energy nucleation problems in \cite{CKZ17,CDPRZZ19}, for the two-well problem in \cite{DM1,DM2}, for the cubic-to-tetragonal phase transformation in \cite{K,CDK} and for the cubic-to-orthorhombic transformation in \cite{R16,Rue16}. Moreover, rigidity properties of related differential inclusions without gauge symmetries are studied in \cite{K,CK02,Z98, Sverak, T93, KMS03, Pompe}.
For sufficiently complex structures of the energy minima, a striking dichotomy between rigidity of the underlying exactly stress-free structures under relatively high regularity conditions (e.g. $BV$ conditions for $\nabla u$) and flexibility of low regularity solutions arises \cite{MS,MS1,K,MSyl,DM96,DaM12, RZZ20, RZZ18,RTZ19,DPR20}.

Moreover, building on the first (more qualitative) step of investigating exactly stress-free structures, further quantitative properties of the resulting material patterns and the associated energies are studied in the literature. For instance, this includes the scaling and relaxation behaviour of the associated energies \cite{KM1, KM, Lorent06,Lorent09, CC15,KW14,KK11,KKO13,R16, RT22,RT21,RT22a, TS} as well as the stability and fine-scale properties of these patterns \cite{CO1,CO,C1,TS1}. We refer to \cite{M1} and \cite{B} for a survey of these results.

\subsection{Outline of the article}
The remainder of the article is structured as follows: In Section \ref{sec:linear_cond} we first exploit the linear structure conditions which are given by the Saint-Venant compatibility conditions and by the assumption of periodicity. Next, in Section \ref{sec:refined} we combine these with the nonlinear and non-convex constraints which arise from our differential inclusion and prove the crucial symmetry breaking in the form that the strains must have only two- instead of fully three-dimensional dependences. Last but not least, in Section \ref{sec:proof} we provide the proof of Theorem \ref{thm:structure}.

\section{First Structure Results: Exploiting the Saint-Venant Conditions}
\label{sec:linear_cond}

In this section we employ the Saint-Venant compatibility conditions to deduce first structure results for $e(u)$. Relying on these structural results, in the next section we will carry out a refined analysis of the restrictions of the strains to certain planes in order to prove that $e(u)$ only depends on two of the three variables.

We begin by recalling the Saint-Venant compatibility conditions (see for instance \cite[Lemma 1]{R16}):

\begin{lem}[Saint-Venant compatibility conditions]
Let $\Omega \subset \R^3$ be a simply connected, bounded domain and let $e:\Omega \rightarrow \R^{3\times 3}_{sym}$ be bounded. Then the following conditions are equivalent:
\begin{itemize}
\item[(i)] there exists a deformation $u \in W^{1,p}(\Omega)$ with $p\in (1,\infty)$ such that $e = \frac{1}{2}(\nabla u + (\nabla u)^t)$, i.e. $e$ is a strain corresponding to a deformation $u$,
\item[(ii)] distributionally it holds that $\nabla \times (\nabla \times e) =0$, i.e the following system of PDEs hold distributionally in $\Omega$
\begin{align}
\label{eq:wave_type}
\begin{split}
2 \p_{12} e_{12} & = \p_{22} e_{11} + \p_{11} e_{22},\\
2 \p_{13} e_{13} & = \p_{33} e_{11} + \p_{11} e_{33},\\
2 \p_{23} e_{23} &= \p_{22} e_{33} + \p_{33} e_{22}, 
\end{split}
\end{align}
\begin{align}
\label{eq:curl_type}
\begin{split}
\p_{23} e_{11} & = \p_1 (-\p_1 e_{23} + \p_2 e_{13} + \p_3 e_{12}),\\
\p_{13} e_{22} & = \p_2(\p_1 e_{23} - \p_2 e_{13} + \p_3 e_{12}),\\
\p_{12} e_{33} & = \p_3(\p_1 e_{23} + \p_{2} e_{13} - \p_3 e_{12}).
\end{split}
\end{align}
\end{itemize}
\end{lem}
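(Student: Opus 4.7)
The plan is to establish the easy direction (i) $\Rightarrow$ (ii) by direct computation, and the converse (ii) $\Rightarrow$ (i) by the classical Saint-Venant/Volterra procedure of constructing an auxiliary infinitesimal rotation and applying the Poincaré lemma twice. For (i) $\Rightarrow$ (ii), I would substitute $e_{ij} = \frac{1}{2}(\p_i u_j + \p_j u_i)$ into each of the six identities in \eqref{eq:wave_type}--\eqref{eq:curl_type} and use equality of mixed distributional partial derivatives. For instance, for the first wave-type equation the right-hand side becomes $\p_{22}\p_1 u_1 + \p_{11}\p_2 u_2$ while the left-hand side equals $\p_{12}(\p_1 u_2 + \p_2 u_1) = \p_{112} u_2 + \p_{122} u_1$, and these agree; the remaining five equations follow by entirely analogous calculations.

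For the converse, I would first mollify, replacing $e$ by $e^\eps := e \ast \rho_\eps$ on a slightly shrunken simply connected subdomain, so that the compatibility conditions (being linear constant-coefficient PDEs) continue to hold pointwise. For each ordered pair $i<j$ I introduce the 1-form with $k$-th coefficient $\alpha^{(ij)}_k := \p_i e^\eps_{jk} - \p_j e^\eps_{ik}$. A direct computation gives
\begin{align*}
\p_l \alpha^{(ij)}_k - \p_k \alpha^{(ij)}_l = \p_{il} e^\eps_{jk} + \p_{jk} e^\eps_{il} - \p_{jl} e^\eps_{ik} - \p_{ik} e^\eps_{jl},
\end{align*}
and this quantity vanishes for every choice of indices as a linear consequence of \eqref{eq:wave_type}--\eqref{eq:curl_type}, which are precisely the six independent components of $\nabla \times (\nabla \times e) = 0$. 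Hence by the Poincaré lemma on the simply connected shrunken domain there exist functions $\omega^\eps_{ij}$, which one arranges to be antisymmetric in $(ij)$ from the skew-symmetric structure of $\alpha$, satisfying $\p_k \omega^\eps_{ij} = \alpha^{(ij)}_k$. The 1-form $\beta^{(j)}_i := e^\eps_{ij} + \omega^\eps_{ij}$ is then closed in the index $i$, since expanding $\p_k \beta^{(j)}_i - \p_i \beta^{(j)}_k$ and using the symmetry of $e^\eps$ causes all terms to cancel in pairs. A second Poincaré lemma yields $u^\eps_j$ with $\p_i u^\eps_j = e^\eps_{ij} + \omega^\eps_{ij}$, whose symmetric part is exactly $e^\eps$.

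It remains to pass to the limit $\eps \to 0$. Normalizing the integration constants by fixing the mean of $u^\eps$ and of the skew part of $\nabla u^\eps$ to zero, the $L^\infty$ bound on $e$ translates to a uniform $W^{1,\infty}$ bound on $u^\eps$, and weak-$\ast$ compactness delivers a limit $u \in W^{1,p}(\Omega)$ with the required symmetrized gradient. The main obstacle I expect is precisely this limit passage, since the rotation and displacement potentials are only determined up to additive constants and infinitesimal rigid motions respectively; one must normalize consistently across $\eps$ so that the family $(u^\eps)$ does not drift. Since the result is entirely classical, a more economical alternative is to invoke a standard formulation of Saint-Venant's theorem directly, which is the route implicit in \cite[Lemma 1]{R16}.
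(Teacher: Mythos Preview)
The paper does not prove this lemma at all: it is stated as a classical fact with the parenthetical reference ``see for instance \cite[Lemma 1]{R16}'' and then immediately used. So there is no ``paper's own proof'' to compare against; your closing remark that one could simply invoke the standard Saint-Venant theorem is exactly what the authors do.

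Your sketch is the standard Volterra construction and is essentially sound. One small correction: the claim that the $L^\infty$ bound on $e$ yields a uniform $W^{1,\infty}$ bound on the normalized $u^\eps$ is not right, since Korn's inequality fails at $p=\infty$. What you actually get, after subtracting the rigid motion, is a uniform $W^{1,p}$ bound for each fixed $p\in(1,\infty)$ via the $L^p$-Korn inequality (the $L^\infty$ bound on $e$ of course controls every $L^p$ norm on a bounded domain). This is all the lemma asks for, so the argument still closes; just replace the $W^{1,\infty}$ claim by $W^{1,p}$ and use weak compactness there. You should also be slightly careful that the shrunken domains exhaust $\Omega$ and that the Korn constants are handled via a diagonal/exhaustion argument, but this is routine.
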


In the following sections we will apply these compatibility equations to solutions to our differential inclusion \eqref{eq:diff_incl}. To this end, we note that by virtue of the constant diagonal entries of the wells from \eqref{eq:strains} the first three strain equations \eqref{eq:wave_type} can be simplified to read
\begin{align*}
\p_{12} e_{12} &= 0,\\
\p_{13} e_{13} &= 0,\\
\p_{23} e_{23} & = 0.
\end{align*}

Integrating this, we directly obtain the following decomposition into two-dimensional waves:
\begin{align}
	e_{12}(x_1,x_2,x_3) & = f_{31}(x_1,x_3) + f_{32}(x_2,x_3), \label{decomposition_12}\\
	e_{23}(x_1,x_2,x_3) &= f_{12}(x_1,x_2) + f_{13}(x_1,x_3),\label{decomposition_23}\\
	e_{31}(x_1,x_2,x_3) & = f_{21}(x_1,x_2) + f_{23}(x_2,x_3),\label{decomposition_31}
\end{align}
for functions $f_{ij} : \T_i\times \T_j \to \R$ (c.f. the argument for Lemma \ref{lem:1D_functions}(4) below).

The two-valuedness of the components of the strain allows to extract further information about these functions.

\begin{lem}\label{lem:1D_functions}
	Let $e(u): \T^3 \rightarrow \R^{3\times 3}_{sym}$ be a $\T^3$-periodic symmetrized gradient solving the differential inclusion \eqref{eq:diff_incl}.
	Then one can decompose the strain components as stated in \eqref{decomposition_12}-\eqref{decomposition_31} and choose the functions $f_{ij}$ for $i,j\in \{1,2,3\}$ with $i\neq j$ to satisfy the following three statements for $\{i,j,k\} = \{1,2,3\}$:
	\begin{enumerate}
		\item The functions $f_{ij}$ satisfy the inclusion $f_{ij} \in \{-1,0,1\}$ a.e. for $i\neq j$.
		\item For almost all $x_i \in \T_i$ we have either $f_{ij}(x_i,\bullet) = 0$ for almost all $x_j \in \T_j$ or $f_{ik}(x_i,\bullet) = 0$ for almost all $x_k \in \T_k$.
		\item If there exists $C \in \R$ such that $e_{ij} = C$, then $f_{ki}$ and $f_{kj}$ can be chosen to be constant.
		\item The functions $f_{ij}$ are periodic in both variables, i.e. for almost every  $(x_i, x_j) \in \T_i \times \T_j $ it holds that $f_{ij}(x_i+\lambda_i,x_j)-f_{ij}(x_i,x_j)=0$ and $f_{ij}(x_i, x_j+ \lambda_j)- f_{ij}(x_i, x_j) = 0$.
	\end{enumerate}
\end{lem}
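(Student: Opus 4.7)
Since the existence of the decomposition \eqref{decomposition_12}-\eqref{decomposition_31} is granted before the lemma statement, my task is to exhibit a specific choice of the $f_{ij}$ satisfying (1)--(4). I would start from the canonical representatives obtained by integrating the simplified wave-type equations $\partial_{ij} e_{ij} = 0$ from the origin, for example
$f_{31}(x_1, x_3) := e_{12}(x_1, 0, x_3)$ and $f_{32}(x_2, x_3) := e_{12}(0, x_2, x_3) - e_{12}(0, 0, x_3)$, together with analogous formulas for the decompositions of $e_{23}$ and $e_{31}$. Statement (4) is then immediate for these representatives: $f_{31}(x_1 + \lambda_1, x_3) - f_{31}(x_1, x_3) = e_{12}(x_1 + \lambda_1, 0, x_3) - e_{12}(x_1, 0, x_3) = 0$ by the periodicity of $e_{12}$, and the analogous equality holds in $x_3$ and for the other pieces.

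The canonical representatives are \emph{a priori} only $\{-2, -1, 0, 1, 2\}$-valued, so (1) and (2) require additional input. The heart of the proof is the following elementary separability observation: \emph{if $\phi: \T_a \times \T_b \to \R$ has the form $\phi(x_a, x_b) = \alpha(x_a) + \beta(x_b)$ with $\phi \in \{-1, 1\}$ a.e., then one of $\alpha, \beta$ is essentially constant}. Indeed, if both $\alpha$ and $\beta$ took two distinct essential values at points $a_1, a_2$ and $b_1, b_2$, then the four sums $s_{ij} := \alpha(a_i) + \beta(b_j)$ would all lie in $\{-1, 1\}$, forcing the nonzero differences $D_\alpha := \alpha(a_2) - \alpha(a_1)$ and $D_\beta := \beta(b_2) - \beta(b_1)$ each to equal $\pm 2$. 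Consequently $s_{22} - s_{11} = D_\alpha + D_\beta \in \{-4, 0, 4\}$: the $\pm 4$ possibilities contradict $s_{11}, s_{22} \in \{-1, 1\}$, while the value $0$ forces $D_\beta = -D_\alpha$ and hence $s_{12} - s_{21} = -D_\alpha + D_\beta = \pm 4$, again impossible.

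Applying this slicewise concludes the argument. For a.e.\ $x_3 \in \T_3$, the restriction $e_{12}(\cdot, \cdot, x_3) = f_{31}(\cdot, x_3) + f_{32}(\cdot, x_3)$ satisfies the hypothesis of the separability observation, so one of the summands is essentially constant on that slice; absorbing that constant into the other summand (the indeterminacy in the split is precisely an additive function of $x_3$ alone) yields a modified pair where one of the two vanishes identically on the slice while the other takes values in $\{-1, 1\}$. Carrying this out for all three decompositions proves (1) and the slicewise vanishing statement (2) simultaneously. Statement (3) is the degenerate sub-case of the same analysis: a globally constant $e_{ij} = C$ makes every slice constant, so one may simply set $f_{ki} \equiv C$ and $f_{kj} \equiv 0$. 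The main obstacle I anticipate is the measurability of the slicewise absorption: the partition of $\T_3$ into $x_3$-values where $f_{31}(\cdot, x_3)$ is essentially constant versus where $f_{32}(\cdot, x_3)$ is must be measurable for the redefined $f_{31}, f_{32}$ to be honest functions on $\T^3$. This should follow from a Fubini-type characterization of slicewise constancy via vanishing of a distributional derivative, but it warrants careful verification.
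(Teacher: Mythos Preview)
Your approach is correct and essentially the same as the paper's: both rest on the elementary separability observation that a $\{-1,1\}$-valued sum of one-variable functions forces one summand to be constant, which the paper encodes tersely by writing $e_{12}=\chi_3(x_3)(1-2\chi_{31})+(1-\chi_3(x_3))(1-2\chi_{32})$ while you spell it out explicitly. The only point to tighten is that your slicewise absorption modifies the canonical representatives, so you should state why (4) survives the modification (it does, since the absorbed constant and the choice of which summand to zero are periodic in the slice variable); the paper instead derives periodicity \emph{after} establishing the characteristic-function form, via a short argument with \eqref{eq:strain_period}.
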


\begin{proof}
The claims follow immediately as by the two-valuedness of the strain components, it is possible to rewrite \eqref{decomposition_12}-\eqref{decomposition_31} as 
\begin{align}
\label{eq:strain_char}
\begin{split}
e_{12}(x_1,x_2,x_3)&= \chi_3(x_3)(1-2\chi_{31}(x_1,x_3)) + (1-\chi_3(x_3))(1-2\chi_{32}(x_2,x_3)),\\
e_{23}(x_1,x_2,x_3)&= \chi_1(x_1)(1-2\chi_{12}(x_1, x_2)) + (1-\chi_1(x_1))(1-2\chi_{13}(x_1, x_3)),\\
e_{31}(x_1,x_2,x_3)&= \chi_2(x_2)(1-2\chi_{21}(x_1,x_2)) + (1-\chi_2(x_2))(1-2\chi_{23}(x_2,x_3)),
\end{split}
\end{align}
where $\chi_j, \chi_{ik}$ are characteristic functions, which can be chosen to be constant if the corresponding entry of the strain $e_{ij}$ is constant.

Finally, we turn to the periodicity claim. By symmetry it suffices to consider $f_{12}$. We note that by the decomposition of the strain into two planar waves, we immediately obtain that for almost every $(x_1,x_2) \in \T_1 \times \T_2$
\begin{align*}
0 = e_{23}(x_1,x_2+\lambda_2,x_3)- e_{23}(x_1,x_2,x_3) = f_{12}(x_1,x_2+\lambda_2)- f_{12}(x_1,x_2).
\end{align*}
In order to also infer the periodicity in the $x_1$ variable, we use that for almost every $(x_1,x_2) \in \T_1 \times \T_2$
\begin{align}
\begin{split}
\label{eq:strain_period}
0 &= e_{23}(x_1+\lambda_1,x_2,x_3)- e_{23}(x_1,x_2,x_3) \\
&= f_{12}(x_1+\lambda_1,x_2)- f_{12}(x_1, x_2) + f_{13}(x_1+\lambda_1,x_3)- f_{13}(x_1, x_3).
\end{split}
\end{align}
Varying the $x_2,x_3$ variables separately, we deduce that for almost every $(x_1,x_2) \in \T_1 \times \T_2$ it holds that $f_{12}(x_1+\lambda_1,x_2)- f_{12}(x_1, x_2) = const.$ Due to the structural result from \eqref{eq:strain_char} this is only possible if $\chi_{1}(x_1+\lambda_1) = \chi_{1}(x_1)$ for almost every $x_1 \in \T_1$. For all $x_1 \in \T_1$ with $\chi_{1}(x_1+\lambda_1) = 0$, the claim of the lemma follows immediately. For $x_1 \in \T_1$ with $\chi_{1}(x_1+\lambda_1) =1$, the periodicity condition \eqref{eq:strain_period} of the strain and the fact that for these choices of $x_1 \in \T_1$ it necessarily holds that $f_{13}(x_1+\lambda_1,x_3) = f_{13}(x_1,x_3) = 0$, then also yields the desired periodicity condition for $f_{12}$. 
\end{proof}

Next we consider the second set of strain equations \eqref{eq:curl_type} which simplify to become
\begin{align}
0&= \p_1(-\p_1 e_{23} + \p_2 e_{13} + \p_3 e_{12}), \label{strain_equation_2_1}\\
0&= \p_2(\p_1 e_{23} - \p_2 e_{13} + \p_3 e_{12}), \label{strain_equation_2_2}\\
0&= \p_3(\p_1 e_{23} + \p_2 e_{13} - \p_3 e_{12}). \label{strain_equation_2_3}
\end{align}
By integrating these, we infer a further structure result:

\begin{lem}\label{lem:constant_averages}
Let $e(u): \T^3 \rightarrow \R^{3\times 3}_{sym}$ be a $\T^3$-periodic symmetrized gradient solving the differential inclusion \eqref{eq:diff_incl}.
Then, for all ${i,j,k} =\{1,2,3\}$ we have that
\begin{align}\label{averages_constant}
	\dashint_{\T_i\times\T_j} e_{ij}(x_i,x_j,x_k) \intd (x_i,x_j) = \dashint_{\T^3} e_{ij}(x) \intd x \text{ for almost every } x_k \in \T_k.
\end{align}
\end{lem}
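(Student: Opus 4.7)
By the symmetry of the strain equations under permutation of indices, it suffices to establish the claim for one choice of $\{i,j,k\}$, say $(i,j,k) = (1,2,3)$, and the other cases follow verbatim by selecting the appropriate equation from \eqref{strain_equation_2_1}--\eqref{strain_equation_2_3}. For this case the relevant identity is \eqref{strain_equation_2_3}, which asserts that
\begin{equation*}
\p_3\bigl(\p_1 e_{23} + \p_2 e_{13} - \p_3 e_{12}\bigr) = 0 \quad \text{in } \mathcal{D}'(\T^3).
\end{equation*}
The plan is to integrate this identity over $\T_1 \times \T_2$, exploit periodicity in $x_1$ and $x_2$ to discard the first two terms, and then use periodicity in $x_3$ to rule out any non-constant affine dependence.

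More precisely, set $F(x_3) := \int_{\T_1 \times \T_2} e_{12}(x_1,x_2,x_3) \intd(x_1,x_2)$, which by Fubini is an $L^\infty$ function of $x_3$. Testing \eqref{strain_equation_2_3} against an arbitrary smooth $\T_3$-periodic function $\varphi(x_3)$ and performing distributional integration by parts (noting $\varphi$ is independent of $x_1$ and $x_2$), the contributions of $\p_1 e_{23}$ and $\p_2 e_{13}$ vanish by periodicity in $x_1$ and $x_2$ respectively. What remains is
\begin{equation*}
\int_{\T_3} F(x_3)\, \varphi''(x_3) \intd x_3 = 0 \quad \text{for every smooth periodic } \varphi,
\end{equation*}
which is precisely the statement that $F'' = 0$ distributionally on $\T_3$.

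Hence $F$ agrees a.e.~with an affine function $x_3 \mapsto a x_3 + b$. The periodicity condition $F(x_3 + \lambda_3) = F(x_3)$, inherited from the periodicity of $e_{12}$, then forces $a = 0$, so $F$ is constant. Dividing by $\lambda_1 \lambda_2$ yields that $\dashint_{\T_1 \times \T_2} e_{12} \intd(x_1,x_2)$ is constant in $x_3$, and integrating one more time over $\T_3$ and dividing by $\lambda_3$ identifies this constant with $\dashint_{\T^3} e_{12}\intd x$, as claimed.

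No genuine obstacle is expected here; the only point requiring some care is the fact that the $e_{ij}$ are merely bounded functions, so all derivatives above must be interpreted distributionally. However, since the elimination of $\p_1 e_{23}$ and $\p_2 e_{13}$ is carried out by testing against a function independent of those variables, and since $L^\infty$ distributional solutions of $F'' = 0$ on the circle are affine representatives, the argument proceeds cleanly.
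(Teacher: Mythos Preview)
Your proof is correct and follows essentially the same route as the paper: both arguments single out equation \eqref{strain_equation_2_3}, average over $\T_1\times\T_2$ so that periodicity in $x_1,x_2$ kills the $\p_1 e_{23}$ and $\p_2 e_{13}$ contributions, and then use periodicity in $x_3$ to conclude that the remaining one-variable function is constant. The only cosmetic difference is that the paper first integrates once in $x_3$ to obtain $\p_1 e_{23}+\p_2 e_{13}-\p_3 e_{12}=h(x_1,x_2)$ and then shows the averaged right-hand side vanishes, whereas you go straight to $F''=0$; these are equivalent packagings of the same computation.
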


\begin{proof}
By symmetry, we only have to consider the case $i=1$, $j=2$, and $k=3$. 
From equation \eqref{strain_equation_2_3}, we obtain
\begin{align*}
\p_1 e_{23} + \p_2 e_{13} - \p_3 e_{12} = h(x_1,x_2).
\end{align*}
Integrating in $x_1,x_2$ and using the periodicity assumption then yields that for almost every $x_3 \in \T_3$
\begin{align*}
\p_3 \dashint_{\T_1 \times \T_2} e_{12}(x_1,x_2,x_3) \intd x_1 \intd x_2
&= \dashint_{\T_1 \times \T_2} h(x_1,x_2) \intd x_1 \intd x_2\\
& \quad - \dashint_{\T_1 \times \T_2} \p_1 e_{23} \intd x_1 \intd x_2
- \dashint_{\T_1 \times \T_2} \p_2 e_{12} \intd x_1 \intd x_2\\
&=  \dashint_{\T_1 \times \T_2} h(x_1,x_2) \intd x_1 \intd x_2.
\end{align*}
Now, integrating this in $x_3$ and using the periodicity of the strain component $e_{12}$, implies that 
\begin{align*}
0 = \dashint_{\T^3} h(x_1,x_2) \intd x_1 \intd x_2 \intd x_3 = \dashint_{\T_1 \times \T_2} h(x_1,x_2) \intd x_1 \intd x_2.
\end{align*}
Hence, by the above computation, $\p_3 \dashint_{\T_1 \times \T_2} e_{12}(x_1,x_2,x_3) \intd x_1 \intd x_2 = 0$, which concludes the argument.
\end{proof}

By combining the information from both sets of strain equations, we can prove the existence of a periodic primitive which in turn is closely related to the planar waves from Lemma \ref{lem:1D_functions}.

\begin{lem}\label{lem:primitive}
Let $e(u): \T^3 \rightarrow \R^{3\times 3}_{sym}$ be a $\T^3$-periodic symmetrized gradient solving the differential inclusion \eqref{eq:diff_incl} and let $f_{ij}$ with $i,j\in\{1,2,3\}$, $i\neq j$, denote the functions from Lemma \ref{lem:1D_functions}.
	Then there exist periodic Lipschitz vector fields $\Psi_{i,i+1} : \T_i \times \T_{i+1} \to \R$ for the cyclical indices $i  \in \{1,2,3\}$ such that with $\Psi_{i+1,i}:= \Psi_{i,i+1} $ the following properties hold:
	\begin{enumerate}
		\item We have the decomposition
			\begin{align*}
				e_{12} & =  \partial_2 \Psi_{23} + \partial_1 \Psi_{31} \phantom{ {}+ \partial_2 \Psi_{12}} + \dashint e_{12} \intd x,\\
				e_{23} & = \phantom{ \partial_3 \Psi_{23} +{}} \partial_3 \Psi_{31}  + \partial_2 \Psi_{12} + \dashint e_{23} \intd x,\\
				e_{31} & = \partial_3 \Psi_{23} \phantom{{}+ \partial_3 \Psi_{31}}  + \partial_1 \Psi_{12} + \dashint e_{31} \intd x.
			\end{align*}
		\item The primitives satisfy the discrete differential inclusion
			\[\partial_j \Psi_{ij} \in \left\{-1 - \dashint_{\T^3} e_{jk} \intd x,1 - \dashint_{\T^3} e_{jk} \intd x,0\right\}\]
			for $\{i,j,k\}=\{1,2,3\}$.
		\item They allow to efficiently detect whether $f_{ij} \equiv const$ in some direction for $i,j\in \{1,2,3\}$ with $i\neq j$: Let $x_i \in \T_i$ be fixed such that $f_{ij}(x_i, \bullet)$ and $\p_j \Psi_{ij}(x_i , \bullet)$ are measurable functions. Then the following properties are equivalent:
			\begin{itemize}
				\item[(i)] $f_{ij}(x_i,\bullet) \equiv  const.$
				\item[(ii)] $\partial_j \Psi_{ij}(x_i, \bullet ) \equiv 0.$
				\item[(iii)] $\left|\left\{x_j \in \T_j: \partial_j \Psi_{ij}(x_i, x_j) = 0 \right\}\right| > 0$.
			\end{itemize}
	\end{enumerate}
\end{lem}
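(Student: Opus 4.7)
The plan is to construct each $\Psi_{ij}$ by integrating the two-dimensional wave components $f_{ij}$ from Lemma \ref{lem:1D_functions} and subtracting mean-value corrections so that the result is periodic with the prescribed averages. Setting $d_{ij}(x_i) := \dashint_{\T_j} f_{ij}(x_i, s) \intd s$ for $\{i,j,k\} = \{1,2,3\}$, the natural candidate is
\[
\Psi_{ij}(x_i, x_j) := \int_0^{x_j} \bigl(f_{ij}(x_i, s) - d_{ij}(x_i)\bigr) \intd s + \int_0^{x_i} \bigl(f_{ji}(t, 0) - d_{ji}(0)\bigr) \intd t,
\]
which by construction satisfies $\partial_j \Psi_{ij}(x_i, x_j) = f_{ij}(x_i, x_j) - d_{ij}(x_i)$ and is periodic in $x_j$. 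The strain decomposition in claim (1) then follows from Lemma \ref{lem:constant_averages}, which provides $d_{ij}(x_i) + d_{ik}(x_i) = \dashint_{\T^3} e_{jk} \intd x$. Periodicity of $\Psi_{ij}$ in $x_i$ follows from Lemma \ref{lem:1D_functions}(4) combined with the vanishing mean of the corrected integrand.

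The principal technical obstacle is to show that $\Psi_{ij}$ actually carries \emph{both} desired partial derivatives simultaneously, i.e., that $\partial_i \Psi_{ij}$ also recovers $f_{ji} - d_{ji}$. This reduces to the cross-derivative compatibility $\partial_i f_{ij}(x_i,x_j) - \partial_j f_{ji}(x_i,x_j) = d_{ij}'(x_i) - d_{ji}'(x_j)$, whose right-hand side is separable. I will derive it from the Saint-Venant curl equations \eqref{strain_equation_2_1}--\eqref{strain_equation_2_3}: integrating once and substituting the planar-wave decompositions \eqref{decomposition_12}--\eqref{decomposition_31} produces an identity whose variable-dependence structure forces the desired separability, up to averaging constants that a suitable normalization of the decomposition absorbs.

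For the discrete differential inclusion in claim (2), the decisive observation is that the characteristic-function representation \eqref{eq:strain_char} forces the dichotomy $d_{ij}(x_i) \in \{0,\, \dashint_{\T^3} e_{jk} \intd x\}$. Indeed, if $\chi_i(x_i) = 0$ then $f_{ij}(x_i, \cdot) \equiv 0$, giving $d_{ij}(x_i) = 0$ and $\partial_j \Psi_{ij}(x_i, \cdot) \equiv 0$; otherwise $f_{ik}(x_i, \cdot) \equiv 0$, so $d_{ik}(x_i) = 0$ and Lemma \ref{lem:constant_averages} yields $d_{ij}(x_i) = \dashint_{\T^3} e_{jk} \intd x$, whence $\partial_j \Psi_{ij} = f_{ij} - \dashint_{\T^3} e_{jk} \intd x \in \{-1 - \dashint_{\T^3} e_{jk} \intd x,\, 1 - \dashint_{\T^3} e_{jk} \intd x\}$. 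Together these cases yield the three-valued inclusion.

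The equivalences in claim (3) unravel from this same dichotomy. Implications (i)$\Rightarrow$(ii)$\Rightarrow$(iii) are immediate, while for (iii)$\Rightarrow$(i) a positive-measure zero set of $\partial_j \Psi_{ij}(x_i, \cdot)$ either places $x_i$ in the first case above (so $f_{ij}(x_i, \cdot) \equiv 0$ is trivially constant), or else in the second case forces $\dashint_{\T^3} e_{jk} \intd x \in \{-1, 1\}$. The latter possibility implies that $e_{jk}$ is constant almost everywhere, and hence $f_{ij}(x_i, \cdot)$ is constant as well.
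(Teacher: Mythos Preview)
Your proposal is correct and follows essentially the same route as the paper: define the line averages $d_{ij}=G_{ij}$, show via the curl-type Saint-Venant equations that $\partial_i f_{ij}-\partial_j f_{ji}$ is separable (hence equals $d_{ij}'(x_i)-d_{ji}'(x_j)$ after normalization), integrate to a common periodic potential $\Psi_{ij}$, and then read off (2) and (3) from the dichotomy in Lemma~\ref{lem:1D_functions}(2) together with Lemma~\ref{lem:constant_averages}. The only organisational differences are that the paper first differentiates \eqref{strain_equation_2_1}--\eqref{strain_equation_2_3} to obtain $\partial_1^2\partial_2 f_{12}=\partial_1\partial_2^2 f_{21}$ and then integrates back (whereas you integrate once and separate variables, which is marginally cleaner), and that for (iii)$\Rightarrow$(i) the paper argues directly via extremality of the average $G_{ij}(x_i)\in\{-1,0,1\}$ rather than your detour through global constancy of $e_{jk}$; both arguments are valid.
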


\begin{proof}
We argue in three steps, first constructing the primitive which yields the identity stated in (1), then derive the properties in (2) and finally deduce the equivalences in (3).\\

\textit{Step 1: Construction of the primitive.}\\
In order to deduce the desired structure result, we combine the strain equations \eqref{strain_equation_2_1}-\eqref{strain_equation_2_3} with the representation formulae from Lemma \ref{lem:1D_functions}.

We take the $\p_2$ derivative of the first strain equation \eqref{strain_equation_2_1} and subtract the $\p_1$ derivative of the second equation \eqref{strain_equation_2_2} to infer the first equation in 

	\begin{align}
	\label{eq:derivative_syst}
	\begin{split}
		0&= -\p_1^2 \p_2 e_{23} + \p_1\p_2^2 e_{13} ,\\
		0&= - \p_2^2\p_3 e_{13} + \p_2\p_3^2 e_{12},\\
		0&= +\p_1^2 \p_3 e_{23} -\p_1\p_3^2 e_{12},\\
		\end{split}
	\end{align}
	while the others follow by symmetry.
	
	Using the representation from Lemma \ref{lem:1D_functions} and evaluating the first equation from \eqref{eq:derivative_syst} yields
	\begin{align*}
		\p_1^2 \p_2 f_{12}(x_1,x_2) = h_{12}(x_1,x_2)=\p_{1}\p_{2}^2 f_{21}(x_1,x_2).
	\end{align*}
	Here $h_{12}(x_1,x_2)$ denotes a generic function in the $x_1,x_2$ variables which may change from each block of equations to the next, as do the functions $g_{12}$, $g_{21}$, $k_{12}$, and $k_{21}$ from what follows for the respective arguments.
	Integrating in the $x_1$ direction hence results in
	\begin{align*}
		\p_{1}\p_2 f_{12}(x_1,x_2) &= h_{12}(x_1,x_2) + g_{12}(x_2),\\
		\p_{2}^2 f_{21}(x_1,x_2) &= h_{12}(x_1, x_2) + g_{21}(x_2).
	\end{align*}
	Integrating in the $x_2$ direction then gives
	\begin{align*}
		\p_{1} f_{12}(x_1,x_2) &= h_{12}(x_1,x_2) + g_{12}(x_2) + k_{12}(x_1),\\
		\p_{2} f_{21}(x_1,x_2) &= h_{12}(x_1, x_2) + g_{21}(x_2) + k_{21}(x_1).
	\end{align*}
In particular, for functions $\bar{g}_{12}: \T_2 \to \R$ and $\bar{k}_{12}: \T_1 \to \R$ this yields
\begin{align*}
\p_1 f_{12}(x_1,x_2) = \p_2 f_{21}(x_1,x_2) - \bar{g}_{12}(x_2)+ \bar{k}_{12}(x_1).
\end{align*}	
Defining $\overline{h}_{12}(x_1,x_2):= \p_2 f_{21}(x_1,x_2) - \bar{g}_{12}(x_2)$, we then infer
\begin{align*}
\p_1 f_{12}(x_1,x_2)&= \overline{h}_{12}(x_1,x_2) + \bar{k}_{12}(x_1),\\
\p_2 f_{21}(x_1,x_2)&= \overline{h}_{12}(x_1,x_2) + \bar{g}_{12}(x_2).
\end{align*}
For convenience of notation, we drop the bars in the sequel and simply write 
	\begin{align}
		\p_{1} f_{12}(x_1,x_2) &= h_{12}(x_1,x_2) + k_{12}(x_1),\label{rot_free_1}\\
		\p_{2} f_{21}(x_1,x_2) &= h_{12}(x_1, x_2) + g_{12}(x_2).\label{rot_free_2}
	\end{align}
We next seek to prove that the vector field $\begin{pmatrix} f_{21} \\ f_{12} \end{pmatrix}$ from equations \eqref{rot_free_1} and \eqref{rot_free_2} essentially comes from a gradient field in the two variables $x_1,x_2$.	

	Averaging over $\T_1\times \T_2$ in the equations \eqref{rot_free_1}, \eqref{rot_free_2} and recalling the periodicity of the functions $f_{ij}$ implies that \[\dashint k_{12} \intd x_1 = \dashint g_{12} \intd x_2.\]
	Thus, possibly after modifying $h_{12}$ by a constant, we can choose \[\dashint k_{12} \intd x_1 = \dashint g_{12} \intd x_2 = 0.\]
Returning to \eqref{rot_free_1}, \eqref{rot_free_2} and invoking the periodicity of $f_{12}$, $f_{21}$, we have for each $\bar{x}_1 \in \T_1, \bar{x}_2 \in \T_2$ that
	\[\dashint h_{12}(x_1,\bar{x}_2) \intd x_1 =  \dashint h_{12}(\bar{x}_1, x_2) \intd x_2 = 0.\]
	This in turn implies
	\begin{align*}
		 k_{12}(x_1) & = \dashint \p _1 f_{12}(x_1,x_2) \intd x_2,\\
		 g_{12}(x_2) & = \dashint \p _2 f_{21}(x_1,x_2) \intd x_1.
	\end{align*}
	For $i,j\in \{1,2,3\}$ with $i\neq j$ setting 
	\begin{align*}
		  G_{ij}(x_i) & := \dashint  f_{ij}(x_i,x_j) \intd x_j,
	\end{align*}
	we see that equations \eqref{rot_free_1} and \eqref{rot_free_2}
	turn into
		\begin{align*}
		\p_{1} (f_{12}(x_1,x_2) - G_{12}(x_1)) &= h_{12}(x_1,x_2) ,\\
		\p_{2} (f_{21}(x_1,x_2)-G_{21}(x_2)) &= h_{12}(x_1, x_2).
	\end{align*}
	Thus, we deduce the existence of a periodic primitive $\Psi_{12} : \T_1 \times \T_2 \to \R$ with
	\begin{align}
	  \partial_1 \Psi_{12} & = f_{21} - G_{21},\\
	  \partial_2 \Psi_{12}  & = f_{12} - G_{12}.\label{def_primitive_12}
	\end{align}
	We note that the periodicity of $\Psi_{12}$ follows from the fundamental theorem of calculus and the mean zero conditions for $\p_1 \Psi_{12}$ and for $\p_2 \Psi_{12}$.
	By cyclical symmetry, we also deduce the existence of $\Psi_{23}$ and $\Psi_{31}$ such that
	\begin{align}\label{definition_primitive}
	\begin{split}
		\partial_2 \Psi_{23} & = f_{32} - G_{32},\\
		\partial_3 \Psi_{23} & = f_{23} - G_{23},\\
		\partial_3 \Psi_{31} & = f_{13} - G_{13},\\
		\partial_1 \Psi_{31} & = f_{31} - G_{31}.
		\end{split}	
	\end{align}
	With this in hand, using the representation \eqref{decomposition_12}-\eqref{decomposition_31}, we first obtain
	\begin{align}
	\label{eq:struc_strain}
	\begin{split}
	e_{12}(x_1, x_2, x_3) &= f_{31}(x_1,x_2) + f_{32}(x_2,x_3)\\
	&= \p_1 \Psi_{31}(x_1,x_3) + G_{31}(x_3) + \p_{2}\Psi_{23}(x_2,x_3) + G_{32}(x_3).
	\end{split}
	\end{align}
	Then, recalling the periodicity of $\Psi_{ij}$ and applying Lemma \ref{lem:constant_averages} implies that 
	\begin{align*}
	G_{31}(x_3) + G_{32}(x_3) = \dashint e_{12} \intd x.
	\end{align*}
	Combining this with \eqref{eq:struc_strain} then concludes the proof of
	the first statement of the Proposition up to cyclical symmetry.\\

\emph{Step 2: Proof of (2).} In order to observe (2), for simplicity, we consider the case $i=1$, $j=2$ and $k=3$, i.e. we seek to prove that
\begin{align}
\label{eq:three_val}
\p_2 \Psi_{12} \in \left\{ -1 - \dashint_{\T^3} e_{23} \intd x , 1-\dashint_{\T^3} e_{23} \intd x,0 \right\}.
\end{align}
We deduce the differential inclusion \eqref{eq:three_val} as a consequence of the definition of $\Psi_{12}$ in terms of the functions $f_{12}$ and $f_{21}$. Indeed, 
\begin{align}
\label{eq:grad_expr}
\begin{pmatrix} \p_1 \Psi_{12}(x_1,x_2)\\ \p_2 \Psi_{12}(x_1,x_2) \end{pmatrix}
= \begin{pmatrix} f_{21}(x_1,x_2)-\dashint_{\T_1} f_{21}(x_1,x_2) \intd x_1 \\ f_{12}(x_1,x_2) - \dashint_{\T_2} f_{12}(x_1,x_2) \intd x_2 \end{pmatrix}.
\end{align}
Now, using Lemma \ref{lem:1D_functions}, we have that $f_{12}\in \{0,1,-1\}$ and that $e_{23} = f_{12} + f_{13}$.
In the following we choose $\bar{x}_1\in \T_1$ such that all functions are still measurable as restrictions to $\{\bar{x}_1\} \times \T_2$, $\{\bar{x}_1\} \times \T_3$ or $\{\bar{x}_1\} \times \T_2 \times \T_3$, which is the case for almost all $\bar{x}_1 \in \T_1$.

If we have $f_{12}(\bar{x}_1,x_2) =0$ for a set of positive measure in $x_2$, then by Lemma \ref{lem:1D_functions} we obtain that $f_{12}(\bar{x}_1,x_2) =0$ for almost all $x_2\in \T_2$.
Thus, by construction \eqref{eq:grad_expr}, also $\p_2 \Psi_{12}(\bar{x}_1,x_2) =0$ for almost all $x_2 \in \T_2$.

If however we have $|f_{12}(\bar{x}_1,x_2)| = 1$ for some set of positive measure in $x_2$, then by Lemma \ref{lem:1D_functions} we have $|f_{12}(\bar{x}_1,x_2)|=1$ for almost all $x_2 \in \T_2$ and
 $f_{13}(\bar{x}_1,x_3)=0$ for almost all $x_3 \in \T_3$.
 As a consequence, 
\begin{align*}
\dashint_{\T_2} f_{12}(\bar{x}_1, x_2) \intd x_2 = \dashint_{\T_2 \times \T_3} e_{23}(\bar{x}_1, x_2, x_3) \intd x_2 \intd x_3 = \dashint_{\T^3} e_{23} \intd x.
\end{align*}
In the last equality we used Lemma \ref{lem:constant_averages}.
Due to $|f_{12}(\bar{x}_1,x_2)|=1$ for almost all $x_2$ and due to the representation \eqref{eq:grad_expr}, we finally obtain the differential inclusion \eqref{eq:three_val}.

This concludes the argument for (2). \\

\textit{Step 3: Proof of (3).} The implications $(i)\Rightarrow (ii)$ and $(ii)\Rightarrow (iii)$ follow directly from the definitions of the functions $\p_j \Psi_{ij}$. In order to infer the equivalences, it thus suffices to prove that (iii) implies (i). For simplicity of notation, we assume that $i=1$, $j=2$, $k=3$.

Recall that for the fixed $x_1$ all restrictions are measurable.
Assuming that (iii) holds, we obtain that there exists a set $E\subset \T_2$ with positive one-dimensional Lebesgue measure such that for $x_2 \in E$ we have $\p_{2} \Psi_{12}(x_1,x_2)=0$. As a consequence, by construction of $\p_2 \Psi_{12}$  for $x_2 \in E$ we obtain
\begin{align}
\label{eq:psi_zero}
f_{12}(x_1,x_2) = \dashint\limits_{\T_2} f_{12}(x_1, x_2) d x_2 \in [-1,1].
\end{align} 
Since by Lemma \ref{lem:1D_functions} we also have $f_{12}\in \{0,\pm 1\}$, the identity \eqref{eq:psi_zero} yields that also
\begin{align*}
	\dashint\limits_{\T_2} f_{12}(x_1, x_2) d x_2 \in  \{\pm 1,0\}.
\end{align*}
If $\dashint\limits_{\T_2} f_{12}(x_1, x_2) d x_2 \in  \{\pm 1\}$, this immediately implies that by discreteness and extremality of the values $\pm 1$ it holds $f_{12}(x_1, \bullet) \equiv \pm 1$ which proves the claim.

It hence suffices to consider the case that $\dashint\limits_{\T_2} f_{12}(x_1, x_2) d x_2 = 0 $. 
Again by \eqref{eq:psi_zero} this however implies that $f_{12}(x_1,\bullet ) = 0$ on a set of positive measure. By Lemma \ref{lem:1D_functions}(2) this in turn results in $f_{12}(x_1,\bullet) = 0$ for almost all $x_2 \in \T_2$ which also proves the claim (3) in the last remaining case.
\end{proof}

\section{A Refined Analysis of the Functions $\Psi_{jk}$}

\label{sec:refined}

In this section, we use the structure results which were deduced from the Saint-Venant conditions and in particular the conditions from Lemma \ref{lem:primitive_constant}(3) in order to obtain finer information on the potentials $\Psi_{jk}$. Here we exploit a final, nonlinear property of the strains from \eqref{eq:strains}, namely
\begin{align*}
e_{12} e_{23} - e_{13} = 0.
\end{align*}
We will use this property (and permutations thereof) on carefully chosen planes in order to obtain further conditions on the potentials $\Psi_{ij}$.
Note that the following result is \emph{not} symmetric in the indices anymore. Indeed, it is in the following lemma that the symmetry is broken for the first time which then subsequently entails the desired one-dimensional dependence of the strain components.

\begin{lem}\label{lem:primitive_constant}
Let $e(u): \T^3 \rightarrow \R^{3\times 3}_{sym}$ be a $\T^3$-periodic symmetrized gradient solving the differential inclusion \eqref{eq:diff_incl} and let $f_{ij}$ with $i,j\in\{1,2,3\}$, $i\neq j$, denote the functions from Lemma \ref{lem:1D_functions}.
	Let $\left|\left\{x_i \in \T_i: f_{ij}(x_i,\bullet) \equiv const \text{ a.e.}\right\} \right| > 0$ for some $\{i,j,k\} \in \{1,2,3\}$.
	Then at least one of the following results holds: Almost everywhere we have 	
	\begin{align*}
	\Psi_{ij} \equiv const \mbox{ or }  \Psi_{jk} \equiv \Psi_{jk}(x_j) \mbox{ or } \Psi_{ik} \equiv \Psi_{ik}(x_k).
	\end{align*}
\end{lem}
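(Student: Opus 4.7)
By the permutation symmetry of the model we may take $(i,j,k) = (1,2,3)$ throughout. Write $A := \{x_1 \in \T_1 : f_{12}(x_1,\bullet) \equiv \text{const a.e.}\}$; by hypothesis $|A| > 0$. Since $f_{12}\in\{-1,0,1\}$ by Lemma~\ref{lem:1D_functions}(1), we split $A = A_0 \cup A_1$ according to whether the constant value is $0$ (on $A_0$) or $\pm 1$ (on $A_1$). At least one of $|A_0|,|A_1|$ is positive, and we treat the two cases separately.

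In the case $|A_1| > 0$, the dichotomy of Lemma~\ref{lem:1D_functions}(2) forces $f_{13}(x_1,\bullet) \equiv 0$ for $x_1 \in A_1$, so $e_{23}$ is a $\pm 1$ constant on every slice $\{x_1\}\times\T_2\times\T_3$ with $x_1\in A_1$. Writing $\bar e_{23} := \dashint_{\T^3} e_{23}\,\intd x$ and reading the representation $e_{23} = \p_3\Psi_{31} + \p_2\Psi_{12} + \bar e_{23}$ of Lemma~\ref{lem:primitive}(1) on such a slice, Lemma~\ref{lem:primitive}(3) gives $\p_2\Psi_{12}(x_1,\bullet)\equiv 0 \equiv \p_3\Psi_{31}(x_1,\bullet)$ there, so $\bar e_{23} = \pm 1$; since $e_{23}\in\{\pm 1\}$ globally this forces $e_{23}$ to be globally constant. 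Returning to the representation, separation of variables in $\p_3\Psi_{31}+\p_2\Psi_{12}\equiv 0$ combined with the $x_2$- and $x_3$-periodicities yields $\p_2\Psi_{12}\equiv 0\equiv\p_3\Psi_{31}$ on all of $\T^3$, so $\Psi_{12}=\Psi_{12}(x_1)$ and $\Psi_{31}=\Psi_{31}(x_1)$. Substituting these reductions into the residual identity $e_{13}=\bar e_{23}\,e_{12}$ (which follows from $e_{12}e_{13}e_{23}\equiv 1$ and $e_{23}\equiv\bar e_{23}$) produces
\begin{align*}
\p_3\Psi_{23}(x_2,x_3) - \bar e_{23}\,\p_2\Psi_{23}(x_2,x_3) = \bar e_{23}\,\Psi_{31}'(x_1) - \Psi_{12}'(x_1),
\end{align*}
and the disjoint dependencies of the two sides together with averaging over $\T_2\times\T_3$ force the common constant to be zero. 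A case analysis based on the discrete inclusion for $\p_1\Psi_{31}$ and $\p_2\Psi_{23}$ from Lemma~\ref{lem:primitive}(2), together with the constraint $e_{12}\in\{\pm 1\}$, then gives either $\Psi_{31}'\equiv 0$---whence also $\Psi_{12}'\equiv 0$ and $\Psi_{12}$ is globally constant---or $\p_2\Psi_{23}\equiv 0$---whence $\p_3\Psi_{23}\equiv 0$ as well and $\Psi_{23}$ is globally constant. Either alternative implies one of the stated conclusions.

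In the case $|A_0|>0$ and $|A_1|=0$, we have $f_{12}(x_1,\bullet)\equiv 0$ for $x_1\in A_0$; since $e_{23}\in\{\pm 1\}$, necessarily $f_{13}(x_1,x_3)\in\{\pm 1\}$ a.e., and $e_{23}$ depends on the slice $\{x_1\}\times\T^2$ only through $x_3$. Writing $\phi_{x_1}(x_3):=f_{13}(x_1,x_3)\in\{\pm 1\}$, the nonlinear identity $e_{13}=e_{12}e_{23}$ becomes on such a slice
\begin{align*}
f_{21}(x_1,x_2) + f_{23}(x_2,x_3) = \phi_{x_1}(x_3)\bigl[f_{31}(x_1,x_3) + f_{32}(x_2,x_3)\bigr].
\end{align*}
Subtracting this identity at two points $x_1,\tilde x_1\in A_0$ yields a relation
\begin{align*}
f_{21}(x_1,x_2) - f_{21}(\tilde x_1,x_2) = H_{x_1,\tilde x_1}(x_3) + \bigl[\phi_{x_1}(x_3)-\phi_{\tilde x_1}(x_3)\bigr]f_{32}(x_2,x_3),
\end{align*}
whose left-hand side is independent of $x_3$. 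Splitting according to whether $\phi_{x_1}\equiv\phi_{\tilde x_1}$ a.e.\ for all pairs in $A_0$ or whether they genuinely vary with $x_1$, and using $f_{ij}\in\{-1,0,1\}$ together with Lemma~\ref{lem:1D_functions}(2) to rule out the marginal separable-but-nonconstant configurations, we conclude that either $f_{31}$ depends only on $x_3$ (giving $\p_1\Psi_{31}\equiv 0$, i.e.\ $\Psi_{31}\equiv\Psi_{31}(x_3)$) or $f_{23}$ depends only on $x_2$ (giving $\p_3\Psi_{23}\equiv 0$, i.e.\ $\Psi_{23}\equiv\Psi_{23}(x_2)$).

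The principal technical obstacle lies in Case A: whereas in Case B global constancy of $e_{23}$ is reached rapidly and the residual algebra on the primitives is essentially linear, here the slice restrictions only provide a one-dimensional dependence of $e_{23}$ on $x_3$, and the structural information must be propagated by playing many slices $x_1\in A_0$ against each other. The delicate point is to exploit the discrete inclusion $f_{ij}\in\{-1,0,1\}$ in combination with Lemma~\ref{lem:1D_functions}(2) to rule out separable-but-non-rigid configurations; this is precisely where the symmetry between the three variables finally gets broken, as anticipated in the discussion preceding the statement.
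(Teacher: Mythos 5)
Your approach diverges substantially from the paper's, and the divergence exposes a genuine gap. The paper proves the lemma by restricting to planes $\{x_3 = \mathrm{const}\}$ with $x_3 \in A$, on which $e_{12}$ becomes a function of $x_1$ alone, and then deriving a transport identity for $\Psi_{12}$ along the characteristics $x_2 - E_{12}(x_1) = \mathrm{const}$. The heart of the argument is a Lebesgue-point/density-point computation showing that, for $x_1$ in the set $B$ where $\partial_2 \Psi_{12}(x_1,\bullet)\equiv 0$, the quantity $\partial_3\Psi_{23}(x_2 - E_{12}(x_1))$ equals a derivative $K'(x_1)$ for \emph{all} $x_2$, which pins down $\partial_3\Psi_{23}(\bullet,x_3)$ as a constant on the fixed plane, after which the dichotomy in Lemma~\ref{lem:primitive}(3) closes the argument. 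You never invoke anything of this kind, and instead split $A$ according to whether the constant value of $f_{12}(x_1,\bullet)$ is $\pm 1$ or $0$.

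Your first subcase ($|A_1|>0$, constant $\pm 1$) is fine: the deduction that $e_{23}$ is globally constant, the separation of variables in $\partial_3\Psi_{31}+\partial_2\Psi_{12}\equiv 0$, the residual relation $e_{13}=\bar e_{23} e_{12}$, and the discrete-inclusion case analysis forcing $\Psi_{31}'\equiv 0$ or $\partial_2\Psi_{23}\equiv 0$ can all be made rigorous along the lines you sketch. But the second subcase ($|A_0|>0$), which you yourself identify as the ``principal technical obstacle,'' is not actually proved. After subtracting the identity at $x_1,\tilde x_1 \in A_0$ you arrive at
\begin{align*}
f_{21}(x_1,x_2) - f_{21}(\tilde x_1,x_2) = H_{x_1,\tilde x_1}(x_3) + \bigl[\phi_{x_1}(x_3)-\phi_{\tilde x_1}(x_3)\bigr]f_{32}(x_2,x_3),
\end{align*}
and then assert that a case split on whether the $\phi_{x_1}$ coincide, ``together with $f_{ij}\in\{-1,0,1\}$ and Lemma~\ref{lem:1D_functions}(2) to rule out the marginal separable-but-nonconstant configurations,'' yields that $f_{31}$ depends only on $x_3$ or $f_{23}$ depends only on $x_2$. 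No argument is given for either branch. In the branch where the $\phi_{x_1}$ all agree, the identity merely says the functions $x_3 \mapsto \phi(x_3)f_{31}(x_1,x_3)$ and $x_2\mapsto f_{21}(x_1,x_2)$ differ across $A_0$ by $x_1$-dependent constants, which constrains them only on $A_0$ and does not by itself force $\partial_1 f_{31}\equiv 0$ globally or $\partial_3 f_{23}\equiv 0$ globally. In the branch where the $\phi_{x_1}$ disagree you obtain relations of the form $\pm 2 f_{32}(x_2,x_3) \mp 2 f_{32}(x_2,x_3') = \mathrm{const}$ for $x_3,x_3'$ in some positive-measure set, which is a constraint on $f_{32}$, not directly on $f_{31}$ or $f_{23}$, and again does not obviously close. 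Until this subcase is actually carried out (the paper's transport/Lebesgue-point machinery, or an equivalent substitute, is needed precisely here), the proof is incomplete.
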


\begin{proof}
	For simplicity, we choose $i=3$ and $j=2$. Thus, we seek to prove that at least one of the following structure results holds $\Psi_{32}\equiv const$ or $\Psi_{21} \equiv \Psi_{21}(x_2)$ or $\Psi_{31} \equiv \Psi_{31}(x_1)$.
	The other cases follow by symmetry.

	Let $A \subset \T_3$ be the set such that for $x_3 \in A$ we have $f_{32}(x_3,\bullet) \equiv const$ and such that all involved functions are defined $\mathcal{L}^2$-a.e.\ on $\T_1\times \T_2 \times \{x_3\}$.
	Note that by assumption we have
	\begin{align}
		|A| > 0. \label{non-empty}
	\end{align}
	In the following, we will fix $x_3 \in A$ and consider all functions to be restricted to this hyperplane by abuse of notation.
	In particular, on any such plane we have $e_{12} = e_{12}(x_1)$.\\

\emph{Step 1: Reduction by means of a transport equation.}	
	The identity $e_{31} - e_{12}e_{23} = 0$ together with the decomposition in Lemma \ref{lem:primitive} implies
	\begin{align*}
		\partial_1 \Psi_{12} - e_{12} \partial_2 \Psi_{12} = - \partial_3 \Psi_{23} + e_{12}\partial_3 \Psi_{31} - \dashint_{\T^3} e_{31} \intd x + e_{12} \dashint_{\T^3} e_{23} \intd x.
	\end{align*}
	Defining $E_{12}(x_1)$ to satisfy
	\begin{align*}
		E_{12}'(x_1) = e_{12}(x_1) \text{ and } E_{12}(0) = 0,
	\end{align*}
	we infer that almost everywhere 
	\begin{align*}
		\partial_1 \left( \Psi_{12} \left( x_1, x_2 - E_{12}(x_1) \right) \right) = - (\partial_3 \Psi_{23})(x_2 - E_{12}(x_1) )  + h(x_1),
	\end{align*}
	where $h$ is a generic measurable and bounded function of $x_1$ that may change from line to line.
	Here the use of the chain rule for the Lipschitz-function $\Psi_{12}$ is justified as the transformation $(x_1,x_2) \mapsto (x_1,x_2 - E_{12}(x_1))$ is volume-preserving.
	
Upon integrating in $x_1$ we get
	\begin{align*}
		\Psi_{12}\left(x_1,x_2-E_{12}(x_1)\right) = -\int_{0}^{x_1}\partial_3 \Psi_{23}(x_2 - E_{12}(s) )\intd s + h(x_1) + k(x_2)
	\end{align*}
	almost everywhere, where $k$ is a measurable function of $x_2$.
	By taking the difference of the above equation for $x_1, \tilde x_1\in \T_1$ we obtain
	\begin{align}\label{replacement_affine}
		\begin{split}
			& \quad \Psi_{12} \left(x_1,x_2 - E_{12}(x_1) \right) - \Psi_{12} \left(\tilde x_1,x_2 - E_{12}( \tilde x_1) \right) - h(x_1) + h(\tilde x_1) \\
			& =  -  \int_{\tilde x_1}^{x_1}\partial_3 \Psi_{23}(x_2 - E_{12}(s) )\intd s.
		\end{split}
	\end{align}

\emph{Step 2: Consequences of the transport equation.}
Let $B:=\{x_1 \in \T_1 : f_{12}(x_1,\bullet) \equiv const\} =\{x_1 \in \T_1 : \p_2 \Psi_{12}(x_1,\bullet) \equiv 0\} $, where the equality of the two sets follows from Lemma \ref{lem:primitive}(3).
We now distinguish two cases:\\
\emph{Step 2.1:}
Firstly, we assume that $\left| B\right| = 0$. Then, $f_{12}(x_1, \bullet) \not \equiv const$ for almost all $x_1 \in \T_1$, which by Lemma \ref{lem:1D_functions} implies $f_{13} \equiv 0$.
	In turn, we get $\Psi_{31} \equiv \Psi_{31}(x_1)$ by Lemma \ref{lem:primitive}.\\

\emph{Step 2.2:}
	Let us consider the case $|B| >0$.
	As for $x_1, \tilde x_1 \in B$, the left hand side of identity \eqref{replacement_affine} is independent of $x_2$, there exists a Lipschitz continuous function $K : B \to \R$ such that for almost all  $(x_1, \tilde x_1) \in B^2$ we have
	\begin{align}\label{difference_Lipschitz}
		K(x_1)-K(\tilde x_1) =  \int_{\tilde x_1}^{x_1}\partial_3 \Psi_{23}(x_2 - E_{12}(s) )\intd s.
	\end{align}
	By Kirszbraun's theorem, we may consider $K$ to be defined on $\T_1$ and thus it is differentiable almost everywhere by Rademacher's theorem.
	
	Let $x_2 \in \T_2$.
	Then the map $ s \mapsto \partial_3 \Psi_{23}(x_2 - E_{12}(s))$ is measurable.
	Therefore, by the Lebesgue point theorem, we have for almost all $x_1 \in \T_1$ that
	\begin{align}\label{Lebesgue point}
		\lim_{\eps \to 0} \dashint_{x_1- \eps}^{x_1 + \eps} \partial_3 \Psi_{23}(x_2 - E_{12}(s) )\intd s = \partial_3 \Psi_{23}(x_2 - E_{12}(x_1) ).
	\end{align}
	By a reflection argument, for all $x_1 \in B$ of density one there exists $\eps_n >0$ for $n\in \N$, depending on $x_1$, such that $\eps_n \to 0$ and $x_1 \pm \eps_n \in B$.
	The identity \eqref{difference_Lipschitz} implies that for $x_1$ satisfying the above requirements, we have
	\begin{align*}
		\dashint_{x_1- \eps_n}^{x_1 + \eps_n} \partial_3 \Psi_{23}(x_2 - E_{12}(s) )\intd s = \frac{1}{2\eps_n} (K(x_1+\eps_n) - K(x_1 -\eps_n)). 
	\end{align*}
	Therefore, the convergence \eqref{Lebesgue point} gives
	\begin{align}
	\label{eq:diff_Leb}
		\partial_3 \Psi_{23}(x_2 - E_{12}(x_1) ) =  K'(x_1)
	\end{align}
	for all $x_2 \in \T_2$ and almost all $x_1 \in B$.

	Consequently, varying $x_2$ in \eqref{eq:diff_Leb}, we observe that $\partial_3 \Psi_{23}(\bullet, x_3) \equiv const$ for $x_3 \in A$, which implies
	$\partial_3 \Psi_{23} (x_2,x_3) = q(x_3)$
	for almost all $(x_2,x_3)\in \T_2 \times A$.
We next claim that the fact that $\partial_3 \Psi_{23} (x_2,x_3) = q(x_3)$
	for almost all $x_2 \in \T_2$ and $x_3 \in A$ together 	
with Lemma \ref{lem:primitive}(3) implies the dichotomy
	\begin{align}
	\label{eq:dichotomy}
	\begin{split}
		f_{23}(x_2,\bullet) \equiv const \text{ for a.e.\ } x_2 \in \T_2 \text{ or } f_{23} (x_2, \bullet) \not \equiv const \text{ for a.e.\ } x_2 \in \T_2.
		\end{split}
	\end{align}
	Indeed, we distinguish two cases:
	\begin{itemize}
	\item[(i)] If for some set of positive measure in $x_2\in \T_2$ and for
	\begin{align*}
	C(x_2):=\{x_3 \in \T_3: \ \p_3 \Psi_{23}(x_2,x_3)=0\ \}
	\end{align*}
	we have $|C(x_2)|>0$,
	then by Lemma \ref{lem:primitive}(3) we obtain that for such a value of $x_2\in \T_2$  and almost all $x_3 \in \T_3$ it holds $\p_3 \Psi_{23}(x_2,x_3)\equiv 0$.
	By virtue of the fact that $\p_3 \Psi_{23}(x_2, x_3) = q(x_3)$ for $(x_2,x_3)\in \T_2 \times A$, we then obtain that for all $(x_2,x_3)\in \T_2 \times A$ it holds that $q(x_3)= 0$ and that $\p_3 \Psi_{23}(x_2, x_3) =0$. 
	 Hence, the condition $|C(x_2)|>0$ holds for almost all $x_2 \in \T_2$. But then Lemma \ref{lem:primitive}(3) implies that $f_{23}(x_2,\bullet) \equiv const$ holds for almost all $x_2 \in \T_2$. 
	\item[(ii)] If for almost all $x_2 \in \T_2$ it holds that
	\begin{align}
	\label{eq:meas1}
	|C(x_2)|=|\{x_3 \in \T_3: \ \p_3 \Psi_{23}(x_2,x_3)=0\}|=0,
	\end{align}
by Lemma \ref{lem:primitive}(3) we have for all $x_2\in \T_2$ that $f_{23}(x_2,\bullet ) \not \equiv const$.
	\end{itemize}

	Therefore, from (i), (ii) we conclude \eqref{eq:dichotomy}. If combined with Lemma \ref{lem:1D_functions}, this in turn yields
		\begin{align}
		\label{eq:dichot_1}
		f_{23}(x_2,\bullet) \equiv const \text{ for a.e.\ } x_2 \in \T_2  \text{ or } f_{21} \equiv 0 \text{ a.e. } \text{ on } \T_1\times \T_2.
	\end{align}
	
	In the first case of \eqref{eq:dichot_1} by Lemma \ref{lem:primitive}(3) we have $\Psi_{23} \equiv \Psi_{23}(x_2)$.
	Moreover, by Lemma \ref{lem:primitive}(3), the assumption \eqref{non-empty} that there exists $x_3 \in \T_3$ such that $f_{32}(x_3,\bullet) \equiv const$, gives that there exist $x_3 \in \T_3$ such that $\partial_2 \Psi_{32}(x_3,\bullet) \equiv 0$.
	Hence, since $\Psi_{23} = \Psi_{32}$, we obtain that $\Psi_{32} \equiv const$ in this case.
	
	In the second case of \eqref{eq:dichot_1}, Lemma \ref{lem:primitive}(3) implies that $\Psi_{12} \equiv \Psi_{12}(x_2)$.
\end{proof}

As a consequence of the previous structure result, we show that we can dispose of one of the functions $\Psi_{ij}$ in the decomposition of the strain:

\begin{cor}\label{cor:primitive_constant}
Let $e(u): \T^3 \rightarrow \R^{3\times 3}_{sym}$ be a $\T^3$-periodic symmetrized gradient solving the differential inclusion \eqref{eq:diff_incl} and let $\Psi_{i,i+1}$ with $i\in\{1,2,3\}$ denote the functions from Lemma \ref{lem:primitive}.
	Then there exists an index $i\in \{1,2,3\}$ such that $\Psi_{i,i+1} \equiv const$.
\end{cor}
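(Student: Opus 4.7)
The plan is to apply Lemma~\ref{lem:primitive_constant} iteratively and then, after reducing to a symmetric ``stuck'' configuration, close the argument using the pointwise nonlinear relation $e_{12}e_{23}=e_{31}$ satisfied by all four wells.

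By Lemma~\ref{lem:1D_functions}(2), for each $i \in \{1,2,3\}$ at least one of the two sets $\{x_i \in \T_i : f_{ij}(x_i,\bullet) \equiv 0\}$ with $j \neq i$ has positive measure, so Lemma~\ref{lem:primitive_constant} may be invoked with some triple $(i,j,k)$. Its conclusion is either $\Psi_{ij} \equiv const$, in which case the corollary is proved, or else one of the ``partial constancies'' $\p_k \Psi_{jk} \equiv 0$ or $\p_i \Psi_{ik} \equiv 0$ holds. By Lemma~\ref{lem:primitive}(3), any such partial constancy is equivalent to $f_{jk}(x_j,\bullet) \equiv const$ on a full-measure subset of $\T_j$ and so itself satisfies the hypothesis of Lemma~\ref{lem:primitive_constant} with a new triple. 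Iterating this, while noting that recording both partial constancies of the same primitive (both $\p_j \Psi_{ij} \equiv 0$ and $\p_i \Psi_{ij} \equiv 0$) immediately yields $\Psi_{ij} \equiv const$, one reduces in finitely many steps to a situation where, up to a permutation of indices, we simultaneously have
\[ \p_2 \Psi_{12} \equiv 0, \quad \p_3 \Psi_{23} \equiv 0, \quad \p_1 \Psi_{31} \equiv 0. \]

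In this stuck configuration, Lemma~\ref{lem:primitive}(1) implies that $e_{12}$ depends only on $(x_2,x_3)$, $e_{23}$ only on $(x_1,x_3)$, and $e_{31}$ only on $(x_1,x_2)$. Since all three are $\{-1,1\}$-valued, the identity $e_{12}(x_2,x_3) e_{23}(x_1,x_3) = e_{31}(x_1,x_2)$ is independent of $x_3$; fixing two distinct values of $x_3$ and dividing the resulting identities shows that the ratios $e_{12}(x_2,x_3)/e_{12}(x_2,x_3')$ and $e_{23}(x_1,x_3')/e_{23}(x_1,x_3)$ are equal constants in $\{-1,1\}$. This produces a multiplicative factorization
\[ e_{12}(x_2,x_3) = a(x_3) A_0(x_2), \quad e_{23}(x_1,x_3) = b(x_3) B_0(x_1), \quad e_{31}(x_1,x_2) = A_0(x_2) B_0(x_1), \]
with all factors $\{-1,1\}$-valued and $a(x_3) b(x_3) \equiv 1$.

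Substituting $f_{21}(x_1,x_2) = e_{31}(x_1,x_2) - G_{23}(x_2) = A_0(x_2) B_0(x_1) - G_{23}(x_2)$ into the identity $\p_1 \Psi_{12}(x_1,x_2) = f_{21}(x_1,x_2) - G_{21}(x_2)$ from Lemma~\ref{lem:primitive}(1), and evaluating $G_{21}(x_2) = A_0(x_2)\bar B_0 - G_{23}(x_2)$, yields
\[ \p_1 \Psi_{12}(x_1,x_2) = A_0(x_2)\bigl(B_0(x_1) - \bar B_0\bigr), \qquad \bar B_0 := \dashint_{\T_1} B_0. \]
Since $\p_2 \Psi_{12} \equiv 0$ forces $\Psi_{12}$ to depend on $x_1$ alone, the right-hand side must be independent of $x_2$, which requires either $A_0 \equiv const$ or $B_0 \equiv const$. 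The analogous calculation gives $\p_2 \Psi_{23}(x_2,x_3) = a(x_3)(A_0(x_2) - \bar A_0)$: if $A_0 \equiv const$ this vanishes and, combined with $\p_3 \Psi_{23} \equiv 0$, yields $\Psi_{23} \equiv const$; if $B_0 \equiv const$ then $\p_1 \Psi_{12}$ itself vanishes and, combined with $\p_2 \Psi_{12} \equiv 0$, yields $\Psi_{12} \equiv const$. The main obstacle is the bookkeeping required to guarantee the iteration reaches the stuck configuration rather than getting trapped in an intermediate state: Lemma~\ref{lem:primitive_constant} only provides a disjunctive conclusion, so a careful finite case analysis on the six possible partial constancies is needed to verify that the only terminating states are either ``done'' or the stuck configuration above (up to permutation).
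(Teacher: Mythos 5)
There is a genuine gap in the iteration step, and you partially acknowledge it yourself. You claim that the iteration (repeated applications of Lemma~\ref{lem:primitive_constant}) terminates either in ``done'' or in the symmetric ``stuck'' configuration $\p_2 \Psi_{12} \equiv 0$, $\p_3 \Psi_{23} \equiv 0$, $\p_1 \Psi_{31} \equiv 0$. This is not correct: there are intermediate terminating states. For instance, if after two applications one has recorded only $\p_3\Psi_{23}\equiv 0$ and $\p_1\Psi_{31}\equiv 0$, a further application of Lemma~\ref{lem:primitive_constant} may return ``$\Psi_{23}\equiv\Psi_{23}(x_2)$'' or ``$\Psi_{31}\equiv\Psi_{31}(x_3)$'', i.e.\ information already in hand, and the iteration stalls without ever producing the third partial constancy on $\Psi_{12}$. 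These intermediate states do still lead to the desired conclusion, but they require a separate argument: once the decomposition of Lemma~\ref{lem:primitive}(1) has been partially reduced, the nonlinear relation $e_{31} = e_{12}e_{23}$ must be applied directly to force the remaining single-variable potential derivative to be a constant, which then vanishes by periodicity. This is exactly what the paper's proof does, via a case split on whether the initial positive-measure set has \emph{full} measure (Case~1, handled with the nonlinear relation on the reduced decomposition) or only positive measure on both pairs (Case~2, which reduces to Case~1 or yields $\Psi_{23}\equiv const$ directly). Your outline never invokes the nonlinear relation at these intermediate states and so cannot close them.

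A secondary, non-fatal issue: in the stuck configuration you actually described, each shear strain depends on a \emph{single} variable (for example $e_{12}$ on $x_2$ alone, since $\p_1\Psi_{31}\equiv 0$ kills one term and $\p_3\Psi_{23}\equiv 0$ forces $\p_2\Psi_{23}$ to depend on $x_2$ only), not on a pair of variables as you assert. Your factorization argument is still valid because a function of $x_2$ is a fortiori a function of $(x_2,x_3)$, but the presentation is misleading, and the dichotomy ``$A_0\equiv const$ or $B_0\equiv const$'' at the end in fact collapses trivially ($B_0\equiv const$ always holds in the stuck configuration, since $e_{23}$ is then a function of $x_3$ alone). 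A more honest treatment of the stuck configuration would simply observe that $e_{12}(x_2)\,e_{23}(x_3)=e_{31}(x_1)$ with all three $\{-1,1\}$-valued forces all three to be constant.
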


\begin{proof}
	By Lemma \ref{lem:1D_functions}(2) we have $\left|\{x_1: f_{12}(x_1,\bullet) \equiv const\} \right| >0$ or $\left|\{x_1: f_{13}(x_1,\bullet) \equiv const\} \right| >0$. We split the argument into two cases.
	
	\textit{Case 1: We assume that either
	\[\left|\{x_1 \in \T_1: f_{12}(x_1,\bullet) \equiv const\} \right| = |\T_1|\]
	or
	\[\left|\{x_1\in \T_1: f_{13}(x_1,\bullet) \equiv const\} \right| = |\T_1|.\]}
	Exchanging the indices 2 and 3 if necessary, it is sufficient to consider
	\[\left|\{x_1\in \T_1: f_{12}(x_1,\bullet) \equiv const\} \right| = |\T_1|,\]
	which implies $\partial_2 \Psi_{12}\equiv 0$.
	Lemma \ref{lem:primitive_constant} implies that at least one of the following statements holds: $\Psi_{12} \equiv const$ or $\Psi_{23} \equiv \Psi_{23}(x_2)$ or $\Psi_{31} \equiv \Psi_{31}(x_3)$.
	
	In the first case there is nothing left to prove.
	In the second case, using our assumption, the decomposition of Lemma \ref{lem:primitive} reads
	\begin{align}
	\label{eq:strain_decomp_aux}
	\begin{split}
				e_{12} & =  \partial_2 \Psi_{23}(x_2) + \partial_1 \Psi_{31}(x_3,x_1) \phantom{ {}+ \partial_2 \Psi_{12}(x_1)} + \dashint e_{12} \intd x,\\
				e_{23} & = \phantom{ \partial_3 \Psi_{23}(x_2) +{}} \partial_3 \Psi_{31}(x_3,x_1)  \phantom{{}+ \partial_2 \Psi_{12}(x_1)} + \dashint e_{23} \intd x,\\
				e_{31} & = \phantom{\partial_3 \Psi_{23}(x_2) + \partial_3 \Psi_{31}(x_3,x_1)  +{}} \partial_1 \Psi_{12}(x_1) + \dashint e_{31} \intd x.
	\end{split}
	\end{align}
Since the only function in the decomposition \eqref{eq:strain_decomp_aux} which depends on $x_2$ is given by $\p_2 \Psi_{23}(x_2)$ (which in turn only appears in the expression for $e_{12}$), the identity $e_{12} - e_{23}e_{31} \equiv 0$ trivially implies $\partial_2\Psi_{23} (\bullet) \equiv const$.
	By periodicity and the fundamental theorem of calculus, we get that the constant has to be zero and thus we obtain $\Psi_{23} \equiv const$.
	
In the third case, we have
	\begin{align*}
				e_{12} & =  \partial_2 \Psi_{23}(x_2,x_3) \phantom{ {}+ \partial_1 \Psi_{31}(x_3) + \partial_2 \Psi_{12}(x_1)} + \dashint e_{12} \intd x,\\
				e_{23} & = \phantom{ \partial_3 \Psi_{23}(x_2,x_3) +{}} \partial_3 \Psi_{31}(x_3)  \phantom{{}+ \partial_2 \Psi_{12}(x_1)} + \dashint e_{23} \intd x,\\
				e_{31} & = \partial_3 \Psi_{23}(x_2,x_3) \phantom{{}+ \partial_3 \Psi_{31}(x_3) } + \partial_1 \Psi_{12}(x_1) + \dashint e_{31} \intd x.
	\end{align*}
	Similarly as in the previous case we have $\partial_1 \Psi_{12} \equiv const$ and thus $\Psi_{12}= const$.
	
	\textit{Case 2: We assume that
	\[\left|\{x_1 \in \T_1: f_{12}(x_1,\bullet) \equiv const\} \right| >0\]
	and
	\[\left|\{x_1 \in \T_1: f_{13}(x_1,\bullet) \equiv const\} \right| >0.\]}
	Again, we only have to deal with the cases in which Lemma \ref{lem:primitive_constant} does not immediately give the desired statement.
	In that case we have 
	\begin{align*}
	\Psi_{23} \equiv \Psi_{23}(x_2) \mbox{ or }  \Psi_{31} \equiv \Psi_{31}(x_3)
	\end{align*}
	 and 
	 \begin{align*}
	 \Psi_{23} \equiv \Psi_{23}(x_3) \mbox{ or } \Psi_{12} \equiv \Psi_{12}(x_2).
	 \end{align*}
	However, the statement $\Psi_{31} \equiv \Psi_{31}(x_3)$ implies 
	\[\left|\{x_3 \in \T_3: f_{31}(x_3,\bullet) \equiv const\} \right| = |\T_3| \]
	and thus (up to symmetry) this case has already been dealt with in case 1 from above.
	The case $\Psi_{12} \equiv \Psi_{12}(x_2)$ can be handled similarly.
	Therefore we have $\Psi_{23} \equiv \Psi_{23}(x_2)$ and $\Psi_{23} \equiv \Psi_{23}(x_3)$, which implies that $\Psi_{23} \equiv const$.
\end{proof}

Finally, using the nonlinear relation satisfied by the strain components in combination with the previously derived reductions of the potentials, we obtain that the strain only depends on two out of three possible variables:

\begin{prop}
\label{prop:2D}Let $e(u): \T^3 \rightarrow \R^{3\times 3}_{sym}$ be a $\T^3$-periodic symmetrized gradient solving the differential inclusion \eqref{eq:diff_incl} and let $\Psi_{i,i+1}$ with $i\in\{1,2,3\}$ denote the functions from Lemma \ref{lem:primitive}.
	Then there exists an index $i\in \{1,2,3\}$ such that $\partial_i e \equiv 0$. Moreover, assuming without loss of generality that $i=2$, we obtain one of the decompositions
	\begin{align}\label{2D_decomposition_one}
		\begin{split}
				e_{12} & =  \phantom{\partial_2 \Psi_{23}(x_3) +{}} \partial_1 \Psi_{31}(x_3,x_1) + \dashint e_{12} \intd x,\\
				e_{23} & = \phantom{ \partial_3 \Psi_{23}(x_3) +{}} \partial_3 \Psi_{31}(x_3,x_1)   + \dashint e_{23} \intd x,\\
				e_{31} & = \partial_3 \Psi_{23}(x_3) \phantom{{}+ \partial_3 \Psi_{31}(x_3,x_1)}   + \dashint e_{31} \intd x
		\end{split}
	\end{align}
	or
	\begin{align}
	\label{2D_decomposition_two}
		\begin{split}
				e_{12} & =  \phantom{\partial_2 \Psi_{23}(x_3) +{}} \partial_1 \Psi_{31}(x_3,x_1) + \dashint e_{12} \intd x,\\
				e_{23} & = \phantom{ \partial_3 \Psi_{23}(x_3) +{}} \partial_3 \Psi_{31}(x_3,x_1)   + \dashint e_{23} \intd x,\\
				e_{31} & = \partial_1 \Psi_{12}(x_1) \phantom{{}+ \partial_3 \Psi_{31}(x_3,x_1)}   + \dashint e_{31} \intd x.
		\end{split}
	\end{align}
\end{prop}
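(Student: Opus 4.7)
My plan starts from Corollary~\ref{cor:primitive_constant}, which guarantees that at least one of the potentials $\Psi_{12},\Psi_{23},\Psi_{31}$ is identically constant. Because the problem is invariant under permutations of the indices $\{1,2,3\}$, I may assume without loss of generality that $\Psi_{12}\equiv const$. Substituting this into the decomposition of Lemma~\ref{lem:primitive}(1) collapses it to
\begin{align*}
e_{12} &= \partial_2\Psi_{23}(x_2,x_3)+\partial_1\Psi_{31}(x_3,x_1)+\dashint e_{12}\intd x, \\
e_{23} &= \partial_3\Psi_{31}(x_3,x_1)+\dashint e_{23}\intd x,\qquad
e_{13} = \partial_3\Psi_{23}(x_2,x_3)+\dashint e_{13}\intd x,
\end{align*}
so that $e_{23}$ and $e_{13}$ are already two-dimensional in $(x_1,x_3)$ and $(x_2,x_3)$, and only $e_{12}$ still carries full three-dimensional dependence.

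To exploit the non-convex structure I combine this with the multiplicative identity $e_{12}=e_{13}e_{23}$, which follows from $e_{12}e_{13}e_{23}\equiv 1$ and $e_{ij}^2\equiv 1$ on each of the four wells. On the additive side, Lemma~\ref{lem:1D_functions} writes $e_{12}=f_{31}(x_1,x_3)+f_{32}(x_2,x_3)$ with $f_{3k}\in\{-1,0,1\}$, and Lemma~\ref{lem:1D_functions}(2) applied with $i=3$ yields the slice-wise dichotomy: for a.e.\ $x_3$, either $f_{31}(x_3,\bullet)\equiv 0$ or $f_{32}(x_3,\bullet)\equiv 0$. If one of these holds globally, say $f_{31}\equiv 0$, then $e_{12}$ depends only on $(x_2,x_3)$; feeding this back into the multiplicative identity forces $e_{23}=e_{12}e_{13}$ to depend on $(x_2,x_3)\cap(x_1,x_3)=\{x_3\}$ alone, so $\partial_1 e\equiv 0$. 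The symmetric case $f_{32}\equiv 0$ yields $\partial_2 e\equiv 0$.

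The main obstacle is the genuinely mixed case in which both $\{x_3:f_{31}(x_3,\bullet)\equiv 0\}$ and its complement have positive measure. Here I would apply Lemma~\ref{lem:primitive_constant} twice: once to $f_{31}$ with $(i,j,k)=(3,1,2)$, yielding $\Psi_{31}\equiv const$ or $\Psi_{23}\equiv\Psi_{23}(x_2)$ (the remaining conclusion being automatic from $\Psi_{12}\equiv const$), and once to $f_{32}$ with $(i,j,k)=(3,2,1)$, yielding $\Psi_{23}\equiv const$ or $\Psi_{31}\equiv\Psi_{31}(x_1)$. A short case check through the four resulting combinations shows that in each one either $\Psi_{31}$ or $\Psi_{23}$ collapses enough---using periodicity to kill the residual constants that the multiplicative identity would otherwise force---to make $e_{13}$ or $e_{23}$ a function of $x_3$ alone, so that $\partial_j e\equiv 0$ for some $j$.

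Once $\partial_i e\equiv 0$ is established and relabelled so that $i=2$, the first two Saint-Venant equations give $\partial_2^2\Psi_{23}=0=\partial_2^2\Psi_{12}$, and periodicity upgrades these to $\partial_2\Psi_{23}\equiv 0$ and $\partial_2\Psi_{12}\equiv 0$, so $\Psi_{23}$ depends only on $x_3$ and $\Psi_{12}$ only on $x_1$. The third strain equation then reads $e_{31}(x_1,x_3)=\partial_3\Psi_{23}(x_3)+\partial_1\Psi_{12}(x_1)+\dashint e_{31}\intd x$. Setting $p:=\partial_3\Psi_{23}$ and $q:=\partial_1\Psi_{12}$, I claim that $p\equiv 0$ or $q\equiv 0$: if both were nontrivial, Lemma~\ref{lem:primitive}(3) would force each to avoid the value $0$ almost everywhere, reducing both to two-valued oscillations in $\{\,1-\dashint e_{31}\intd x,\,-1-\dashint e_{31}\intd x\,\}$; an elementary enumeration of the sum set then shows that $p+q+\dashint e_{31}\intd x\in\{\pm 1\}$ is only possible when $\dashint e_{31}\intd x=\pm 1$, which makes $e_{31}$ constant, forces $p+q\equiv 0$, and hence by mean-zero periodicity $p\equiv q\equiv 0$, a contradiction. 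The two surviving cases $q\equiv 0$ and $p\equiv 0$ correspond precisely to the decompositions~\eqref{2D_decomposition_one} (with $\Psi_{12}\equiv const$) and~\eqref{2D_decomposition_two} (with $\Psi_{23}\equiv const$), finishing the argument.
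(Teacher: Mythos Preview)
Your handling of the ``easy'' cases (one of $f_{31},f_{32}$ vanishing globally) via the multiplicative identity $e_{12}=e_{13}e_{23}$ is correct, and your final paragraph extracting the two decompositions from $\partial_2 e\equiv 0$ via the three-valued inclusion for $p,q$ is also a valid alternative to the paper's direct route. The genuine gap is in the ``mixed case''.

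When you invoke Lemma~\ref{lem:primitive_constant} with $(i,j,k)=(3,1,2)$ under the hypothesis $|\{x_3: f_{31}(x_3,\bullet)\equiv\text{const}\}|>0$, the lemma delivers the disjunction
\[
\Psi_{31}\equiv\text{const}\quad\text{or}\quad \Psi_{12}\equiv\Psi_{12}(x_1)\quad\text{or}\quad \Psi_{32}\equiv\Psi_{32}(x_2).
\]
You then discard the middle alternative as ``automatic from $\Psi_{12}\equiv\text{const}$''. This is the wrong direction of inference: the lemma only asserts that \emph{at least one} of the three holds, and since the middle one is already known, the disjunction is trivially satisfied and gives no information about $\Psi_{31}$ or $\Psi_{32}$. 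The same problem occurs in the second application with $(i,j,k)=(3,2,1)$, where the alternative $\Psi_{21}\equiv\Psi_{21}(x_2)$ is again already a consequence of $\Psi_{12}\equiv\text{const}$. Thus the ``four resulting combinations'' you propose to case-check never materialise, and the mixed case is left completely open.

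This is precisely the step where the paper brings in a new idea rather than recycling Lemma~\ref{lem:primitive_constant}. With $\Psi_{12}\equiv\text{const}$, the relation $e_{31}-e_{23}e_{12}=0$ becomes $\partial_3\Psi_{23}-e_{23}\,\partial_2\Psi_{23}=(\text{function of }x_1,x_3)$; applying a discrete difference $\partial_2^h$ in $x_2$ kills the right-hand side and yields the homogeneous transport equation $\partial_3(\partial_2^h\Psi_{23})-e_{23}\,\partial_2(\partial_2^h\Psi_{23})=0$ with $e_{23}=e_{23}(x_1,x_3)$. Integrating along characteristics propagates the vanishing $\partial_2\Psi_{23}(\bullet,\bar x_3)\equiv 0$---known on a set of positive $x_3$-measure by \eqref{eq:vol_triv}---to the whole torus. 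This transport argument, not a further appeal to Lemma~\ref{lem:primitive_constant}, is what forces $\partial_2\Psi_{23}\equiv 0$ and hence $\partial_2 e\equiv 0$ in the mixed regime.
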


\begin{proof}
	By symmetry and Corollary \ref{cor:primitive_constant}, we may assume $\Psi_{12} \equiv const$.
	Then the decomposition reads
	\begin{align*}
				e_{12} & =  \partial_2 \Psi_{23} + \partial_1 \Psi_{31} + \dashint e_{12} \intd x,\\
				e_{23} & = \phantom{ \partial_3 \Psi_{23} +{}} \partial_3 \Psi_{31}   + \dashint e_{23} \intd x,\\
				e_{31} & = \partial_3 \Psi_{23} \phantom{{}+ \partial_3 \Psi_{31}}   + \dashint e_{31} \intd x.
	\end{align*}
	Due to $e_{12} \in \{-1,1\}$ being a sum of two one-dimensional functions and by periodicity, for $x_3 \in \T_3$ fixed we can by Lemma \ref{lem:primitive} additionally assume
	\begin{align}
	\label{eq:vol_triv}
		\left|\{x_3 \in \T_3: \partial_2 \Psi_{23}(\bullet, x_3) \equiv 0 \} \right| >0
	\end{align}
	after exchanging the indices $1$ and $2$ if necessary.
	
	The algebraic relation $e_{31} - e_{23}e_{12} = 0$ implies
	\begin{align*}
		\partial_{3} \Psi_{23} - e_{23} \partial_2 \Psi_{23} =  e_{23} \left(\partial_1 \Psi_{31} + \dashint e_{12} \intd x \right) - \dashint e_{31} \intd x.
	\end{align*}
	Using that the right hand side of the above expression is independent of $x_2$, defining the discrete differential $\partial_2^h f(x_2):= f(x_2+h) - f(x_2)$ for $h>0$ and applying it to the above equation, we get
	\begin{align*}
		\partial_{3} \partial_2^h \Psi_{23} - e_{23} \partial_2 \partial_2^h \Psi_{23} = 0.
	\end{align*}
	Defininig
	\begin{align*}
		\partial_3 E_{23} (x_1,x_3) &  = e_{23}(x_1,x_3),\\
		E_{23} (x_1,0) &  = 0,
	\end{align*}
	we may invoke the chain rule for the Lipschitz function $\partial_2^h\Psi_{23}$ to obtain
	\begin{align*}
		\partial_{3}\left( \partial_2^h \Psi_{23}\left(x_2- E_{23}(x_1,x_3),x_3 \right) \right) = 0
	\end{align*}
	for almost all $(x_1,x_2,x_3) \in \T^3$.
		
	Recalling \eqref{eq:vol_triv}, choosing $\bar x_3 \in \T_3$ such that 
	\begin{align}\label{bar_x_3}
		\partial_2 \Psi_{23}(\bullet, \bar x_3) \equiv 0\text{ and } x_2 \mapsto \Psi_{23}(x_2,\bar x_3)\text{ is differentiable a.e.,}
	\end{align}
	and integrating in $x_3$ gives
	\begin{align*}
		 \partial_2^h \Psi_{23}\left(x_2- E_{23}(x_1,x_3),x_3 \right) = \partial_2^h \Psi_{23}\left(x_2- E_{23}(x_1,\bar x_3), \bar x_3 \right)
	\end{align*}
	almost everywhere.
	Dividing by $h \neq 0$, taking the limit $h\to 0$ and invoking the choice \eqref{bar_x_3} we see that 
	\begin{align*}
		 \partial_2 \Psi_{23}\left(x_2- E_{23}(x_1,x_3),x_3 \right)  = \partial_2 \Psi_{23}\left(x_2- E_{23}(x_1,\bar x_3), \bar x_3 \right) = 0
	\end{align*}
	for almost all $(x_1,x_2,x_3) \in \T^3$.
	
	Consequently, we have $\partial_2 \Psi_{23} \equiv 0$ and the decomposition reduces to
	\begin{align*}
				e_{12} & =  \phantom{\partial_2 \Psi_{23}(x_3) +{}} \partial_1 \Psi_{31}(x_3,x_1) + \dashint e_{12} \intd x,\\
				e_{23} & = \phantom{ \partial_3 \Psi_{23}(x_3) +{}} \partial_3 \Psi_{31}(x_3,x_1)   + \dashint e_{23} \intd x,\\
				e_{31} & = \partial_3 \Psi_{23}(x_3) \phantom{{}+ \partial_3 \Psi_{31}(x_3,x_1)}   + \dashint e_{31} \intd x,
	\end{align*}
	which is the first case \eqref{2D_decomposition_one} of the desired statement.
	
	If we had taken the other choice at the inequality \eqref{eq:vol_triv}, by symmetry, we would have deduced
	\begin{align*}
				e_{12} & = \phantom{\partial_1 \Psi_{31}(x_3) +{} } \partial_2 \Psi_{23}(x_2,x_3)  + \dashint e_{12} \intd x,\\
				e_{23} & = \partial_3 \Psi_{13}(x_3) \phantom{{}+\partial_3 \Psi_{31}(x_3,x_1)}   + \dashint e_{23} \intd x,\\
				e_{31} & =\phantom{ \partial_3 \Psi_{31}(x_3) +{} }  \partial_3 \Psi_{23}(x_3,x_2)   + \dashint e_{31} \intd x,
	\end{align*}
	so that $\partial_{1}e \equiv 0$.
	A cyclical permutation shifting the index 1 to 2 allows us to deduce the second representation \eqref{2D_decomposition_two}.

\end{proof}

\section{Proof of Theorem \ref{thm:structure}}

\label{sec:proof}

With the result of Proposition \ref{prop:2D} the argument for Theorem \ref{thm:structure} follows as in \cite[Section 4.2.1]{R16}. We repeat the argument for self-containedness.

\begin{proof}[Proof of Theorem \ref{thm:structure}]	
Since the claims of Theorem \ref{thm:structure}(i), (ii) already follow from Proposition \ref{prop:2D}, it suffices to prove the claim of Theorem \ref{thm:structure}(iii).
 Without loss of generality we may assume that as in Proposition \ref{prop:2D} we have $\p_i e = 0$ for $i=2$ as well as the decomposition \eqref{2D_decomposition_one}. Moreover, invoking Theorem \ref{thm:structure}(ii), without loss of generality, we may further assume that $e_{31}=e_{31}(x_3)$
Using the notation from \cite{R16}, we define 
\begin{align*}
v(x_1,x_3):= \Psi_{31}(x_3,x_1) +  \left(\dashint e_{12} \intd x \right)\, x_1 + \left(\dashint e_{23} \intd x \right) \, x_3.
\end{align*}
Further we set
$\Phi(s,t):= (t-E_{31}(s),s),
$
where $E_{31}'(s) = e_{31}(s)$ and $E_{31}(0)=0$. We note that the map $\Phi(s,t)$ is a bilipschitz mapping on $\R^2$. Together with the definition of $v$ we thus infer
\begin{align*}
\frac{d}{ds} v(\Phi(s,t)) = \p_3 v|_{\Phi(s,t)} - e_{31}(s) \p_1 v|_{\Phi(s,t)} = e_{23}|_{\Phi(s,t)}-e_{31}|_{\Phi(s,t)} e_{12}|_{\Phi(s,t)}= 0.
\end{align*}
As a consequence, $v(\Phi(s,t)) = \tilde{g}(t)$ for some function $\tilde{g}$ of only one variable. 
Hence,
\begin{align*}
\begin{pmatrix}
e_{12}\circ \Phi (s,t)\\
e_{23}\circ \Phi (s,t)
\end{pmatrix}
&= \begin{pmatrix}
\p_1 v \circ \Phi(s,t)\\
\p_3 v \circ \Phi(s,t)\\
\end{pmatrix}
= \begin{pmatrix} 1 &0\\ e_{31}(s) & 1
\end{pmatrix} \begin{pmatrix} \p_t (v\circ \Phi)(s,t) \\ \p_s (v\circ \Phi)(s,t) \end{pmatrix}\\
&= \begin{pmatrix} 1 & 0\\ e_{31}(s) & 1
\end{pmatrix} \begin{pmatrix} \p_t (v\circ \Phi)(s,t) \\ 0\end{pmatrix}
= \begin{pmatrix}
\tilde{g}'(t)\\
e_{31}(s) \tilde{g}'(t)
\end{pmatrix}.
\end{align*}
Defining $f_3(x_3) = e_{31}(x_3) $ and $g(t) = \tilde{g}'(t)$, then concludes the proof of the final structure result.
\end{proof}

\section*{Acknowledgements}

A.R. acknowledges funding by the SPP 2256, project ID 441068247 and support by the Heidelberg STRUCTURES Excellence Cluster which is funded by the Deutsche Forschungsgemeinschaft (DFG, German Research Foundation)
under Germany’s Excellence Strategy EXC 2181/1 - 390900948.  
T.S. acknowledges funding by the Deutsche
Forschungsgemeinschaft (DFG, German Research Foundation) under
Germany's Excellence Strategy EXC 2044 –390685587, Mathematics
Münster: Dynamics – Geometry – Structure.
Both authors would like to thank the MPI MIS where this work was initiated.

\bibliographystyle{alpha}
\bibliography{citations1}

\newcommand{\etalchar}[1]{$^{#1}$}
\begin{thebibliography}{CDPR{\etalchar{+}}20}

\bibitem[BD00]{BD00}
Kaushik Bhattacharya and Georg Dolzmann.
\newblock Relaxed constitutive relations for phase transforming materials.
\newblock {\em Journal of the Mechanics and Physics of Solids},
  48(6-7):1493--1517, 2000.

\bibitem[BD01]{BD01}
Kaushik Bhattacharya and Georg Dolzmann.
\newblock Relaxation of some multi-well problems.
\newblock {\em Proceedings of the Royal Society of Edinburgh Section A:
  Mathematics}, 131(2):279--320, 2001.

\bibitem[Bha03]{B}
Kaushik Bhattacharya.
\newblock {\em Microstructure of martensite: why it forms and how it gives rise
  to the shape-memory effect Oxford series on materials modeling}.
\newblock Oxford University Press, 2003.

\bibitem[BJ92]{BJ92}
John~M Ball and Richard~D James.
\newblock Proposed experimental tests of a theory of fine microstructure and
  the two-well problem.
\newblock {\em Phil. Trans. R. Soc. Lond. A}, 338(1650):389--450, 1992.

\bibitem[CC15]{CC15}
Allan Chan and Sergio Conti.
\newblock Energy scaling and branched microstructures in a model for
  shape-memory alloys with ${S}{O}(2)$ invariance.
\newblock {\em Mathematical Models and Methods in Applied Sciences},
  25(06):1091--1124, 2015.

\bibitem[CDK07]{CDK}
Sergio Conti, Georg Dolzmann, and Bernd Kirchheim.
\newblock Existence of lipschitz minimizers for the three-well problem in
  solid-solid phase transitions.
\newblock {\em Annales de l'Institut Henri Poincare (C) Non Linear Analysis},
  24(6):953 -- 962, 2007.

\bibitem[CDPR{\etalchar{+}}20]{CDPRZZ19}
Pierluigi Cesana, Francesco Della~Porta, Angkana R{\"u}land, Christian
  Zillinger, and Barbara Zwicknagl.
\newblock Exact constructions in the (non-linear) planar theory of elasticity:
  from elastic crystals to nematic elastomers.
\newblock {\em Archive for Rational Mechanics and Analysis}, 237(1):383--445,
  2020.

\bibitem[CK02]{CK02}
Miroslav Chleb{\'\i}k and Bernd Kirchheim.
\newblock Rigidity for the four gradient problem.
\newblock 2002.

\bibitem[CKZ17]{CKZ17}
Sergio Conti, Matthias Klar, and Barbara Zwicknagl.
\newblock Piecewise affine stress-free martensitic inclusions in planar
  nonlinear elasticity.
\newblock {\em Proceedings of the Royal Society A: Mathematical, Physical and
  Engineering Sciences}, 473(2203):20170235, 2017.

\bibitem[CO09]{CO1}
Antonio Capella and Felix Otto.
\newblock A rigidity result for a perturbation of the geometrically linear
  three-well problem.
\newblock {\em Communications on Pure and Applied Mathematics},
  62(12):1632--1669, 2009.

\bibitem[CO12]{CO}
Antonio Capella and Felix Otto.
\newblock A quantitative rigidity result for the cubic-to-tetragonal phase
  transition in the geometrically linear theory with interfacial energy.
\newblock {\em Proceedings of the Royal Society of Edinburgh: Section A
  Mathematics, 142 , pp 273-327 doi:10.1017/S0308210510000478}, 2012.

\bibitem[Con00]{C1}
Sergio Conti.
\newblock Branched microstructures: scaling and asymptotic self-similarity.
\newblock {\em Comm. Pure Appl. Math}, 53(11):1448--1474, 2000.

\bibitem[DM95a]{DM2}
Georg Dolzmann and Stefan M{\"u}ller.
\newblock The influence of surface energy on stress-free microstructures in
  shape memory alloys.
\newblock {\em Meccanica}, 30:527--539, 1995.
\newblock 10.1007/BF01557083.

\bibitem[DM95b]{DM1}
Georg Dolzmann and Stefan M{\"u}ller.
\newblock Microstructures with finite surface energy: the two-well problem.
\newblock {\em Archive for Rational Mechanics and Analysis}, 132:101--141,
  1995.

\bibitem[DM96]{DM96}
Bernard Dacorogna and Paolo Marcellini.
\newblock Th{\'e}or{\`e}mes d'existence dans les cas scalaire et vectoriel pour
  les {\'e}quations de {H}amilton-{J}acobi.
\newblock {\em Comptes Rendus de l'Academie des Sciences-Serie I-Mathematique},
  322(3):237--240, 1996.

\bibitem[DM12]{DaM12}
Bernard Dacorogna and Paolo Marcellini.
\newblock {\em Implicit partial differential equations}, volume~37.
\newblock Springer Science \& Business Media, 2012.

\bibitem[DPR20]{DPR20}
Francesco Della~Porta and Angkana R{\"u}land.
\newblock Convex integration solutions for the geometrically nonlinear two-well
  problem with higher {S}obolev regularity.
\newblock {\em Mathematical Models and Methods in Applied Sciences},
  30(03):611--651, 2020.

\bibitem[HS00]{HS00}
Kevin~F Hane and Thomas~W Shield.
\newblock Microstructure in the cubic to trigonal transition.
\newblock {\em Materials Science and Engineering: A}, 291(1-2):147--159, 2000.

\bibitem[Kir98]{K}
Bernd Kirchheim.
\newblock Lipschitz minimizers of the 3-well problem having gradients of
  bounded variation.
\newblock {\em MPI preprint}, 1998.

\bibitem[KK11]{KK11}
Hans Kn{\"u}pfer and Robert~V Kohn.
\newblock Minimal energy for elastic inclusions.
\newblock {\em Proceedings of the Royal Society A: Mathematical, Physical and
  Engineering Sciences}, 467(2127):695--717, 2011.

\bibitem[KKO13]{KKO13}
Hans Kn{\"u}pfer, Robert~V Kohn, and Felix Otto.
\newblock Nucleation barriers for the cubic-to-tetragonal phase transformation.
\newblock {\em Communications on pure and applied mathematics}, 66(6):867--904,
  2013.

\bibitem[KM92]{KM}
Robert~V Kohn and Stefan M{\"u}ller.
\newblock Branching of twins near an austenite—twinned-martensite interface.
\newblock {\em Philosophical Magazine A}, 66(5):697--715, 1992.

\bibitem[KM94]{KM1}
Robert~V Kohn and Stefan M{\"u}ller.
\newblock Surface energy and microstructure in coherent phase transitions.
\newblock {\em Communications on Pure and Applied Mathematics}, 47(4):405--435,
  1994.

\bibitem[KM{\v{S}}03]{KMS03}
Bernd Kirchheim, Stefan M{\"u}ller, and Vladim{\'\i}r {\v{S}}ver{\'a}k.
\newblock Studying nonlinear {P}{D}{E} by geometry in matrix space.
\newblock In {\em Geometric analysis and nonlinear partial differential
  equations}, pages 347--395. Springer, 2003.

\bibitem[KW14]{KW14}
Robert~V Kohn and Benedikt Wirth.
\newblock Optimal fine-scale structures in compliance minimization for a
  uniaxial load.
\newblock {\em Proceedings of the Royal Society A: Mathematical, Physical and
  Engineering Sciences}, 470(2170):20140432, 2014.

\bibitem[Lor06]{Lorent06}
Andrew Lorent.
\newblock The two-well problem with surface energy.
\newblock {\em Proceedings of the Royal Society of Edinburgh Section A:
  Mathematics}, 136(4):795--805, 2006.

\bibitem[Lor09]{Lorent09}
Andrew Lorent.
\newblock The regularisation of the $n$-well problem by finite elements and by
  singular perturbation are scaling equivalent in two dimensions.
\newblock {\em ESAIM: Control, Optimisation and Calculus of Variations},
  15(2):322--366, 2009.

\bibitem[M{\v{S}}98]{MS1}
Stefan M{\"u}ller and Vladimir {\v{S}}ver{\'a}k.
\newblock Unexpected solutions of first and second order partial differential
  equations.
\newblock page 691, 1998.

\bibitem[M{\v{S}}99]{MS}
Stefan M{\"u}ller and Vladim{\'i}r {\v{S}}ver{\'a}k.
\newblock Convex integration with constraints and applications to phase
  transitions and partial differential equations.
\newblock {\em Journal of the European Mathematical Society}, 1:393--422, 1999.
\newblock 10.1007/s100970050012.

\bibitem[MS01]{MSyl}
Stefan M{\"u}ller and Mikhail~A Sychev.
\newblock Optimal existence theorems for nonhomogeneous differential
  inclusions.
\newblock {\em Journal of Functional Analysis}, 181(2):447--475, 2001.

\bibitem[M{\"u}l99]{M1}
Stefan M{\"u}ller.
\newblock Variational models for microstructure and phase transitions.
\newblock In {\em Calculus of variations and geometric evolution problems},
  pages 85--210. Springer, 1999.

\bibitem[Pom10]{Pompe}
Waldemar Pompe.
\newblock Explicit construction of piecewise affine mappings with constraints.
\newblock {\em Bulletin of the Polish Academy of Sciences. Mathematics},
  58(3):209--220, 2010.

\bibitem[PS69]{PS69}
RN~Patil and EC~Subbarao.
\newblock Axial thermal expansion of {Z}r{O}2 and {H}f{O}2 in the range room
  temperature to 1400° {C}.
\newblock {\em Journal of Applied Crystallography}, 2(6):281--288, 1969.

\bibitem[RT21]{RT21}
Angkana R{\"u}land and Antonio Tribuzio.
\newblock On the energy scaling behaviour of singular perturbation models
  involving higher order laminates.
\newblock {\em arXiv preprint arXiv:2110.15929}, 2021.

\bibitem[RT22a]{RT22a}
Angkana R{\"u}land and Antonio Tribuzio.
\newblock On scaling laws for multi-well nucleation problems without gauge
  invariances.
\newblock {\em arXiv preprint arXiv:2206.05164}, 2022.

\bibitem[RT22b]{RT22}
Angkana R{\"u}land and Antonio Tribuzio.
\newblock On the energy scaling behaviour of a singularly perturbed {T}artar
  square.
\newblock {\em Archive for Rational Mechanics and Analysis}, 243(1):401--431,
  2022.

\bibitem[RTZ18]{RTZ19}
Angkana R{\"u}land, Jamie~M Taylor, and Christian Zillinger.
\newblock Convex integration arising in the modelling of shape-memory alloys:
  some remarks on rigidity, flexibility and some numerical implementations.
\newblock {\em Journal of Nonlinear Science}, pages 1--48, 2018.

\bibitem[R{\"u}l16a]{R16}
Angkana R{\"u}land.
\newblock The cubic-to-orthorhombic phase transition: Rigidity and non-rigidity
  properties in the linear theory of elasticity.
\newblock {\em Archive for Rational Mechanics and Analysis}, 221(1):23--106,
  2016.

\bibitem[R{\"u}l16b]{Rue16}
Angkana R{\"u}land.
\newblock A rigidity result for a reduced model of a cubic-to-orthorhombic
  phase transition in the geometrically linear theory of elasticity.
\newblock {\em Journal of Elasticity}, 123(2):137--177, 2016.

\bibitem[RZZ18]{RZZ18}
Angkana R{\"u}land, Christian Zillinger, and Barbara Zwicknagl.
\newblock Higher {S}obolev regularity of convex integration solutions in
  elasticity: The {D}irichlet problem with affine data in int{$(K^{lc})$}.
\newblock {\em SIAM Journal on Mathematical Analysis}, 50(4):3791--3841, 2018.

\bibitem[RZZ20]{RZZ20}
Angkana R{\"u}land, Christian Zillinger, and Barbara Zwicknagl.
\newblock Higher {S}obolev regularity of convex integration solutions in
  elasticity: The planar geometrically linearized hexagonal-to-rhombic phase
  transformation.
\newblock {\em Journal of Elasticity}, 138(1):1--76, 2020.

\bibitem[Sim97]{S97}
NK~Simha.
\newblock Twin and habit plane microstructures due to the tetragonal to
  monoclinic transformation of zirconia.
\newblock {\em Journal of the Mechanics and Physics of Solids}, 45(2):261--292,
  1997.

\bibitem[Sim21a]{TS1}
Theresa~M Simon.
\newblock Quantitative aspects of the rigidity of branching microstructures in
  shape memory alloys via h-measures.
\newblock {\em SIAM Journal on Mathematical Analysis}, 53(4):4537--4567, 2021.

\bibitem[Sim21b]{TS}
Theresa~M Simon.
\newblock Rigidity of branching microstructures in shape memory alloys.
\newblock {\em Archive for Rational Mechanics and Analysis}, 241(3):1707--1783,
  2021.

\bibitem[Tar93]{T93}
Luc Tartar.
\newblock Some remarks on separately convex functions.
\newblock In {\em Microstructure and phase transition}, pages 191--204.
  Springer, 1993.

\bibitem[\v{S}93]{Sverak}
Vladim\'{\i}r \v{S}ver\'{a}k.
\newblock On the problem of two wells.
\newblock In {\em Microstructure and phase transition}, volume~54 of {\em IMA
  Vol. Math. Appl.}, pages 183--189. Springer, New York, 1993.

\bibitem[Zha98]{Z98}
Kewei Zhang.
\newblock On the structure of quasiconvex hulls.
\newblock {\em Annales de l'Institut Henri Poincar{\'e} C}, 15(6):663--686,
  1998.

\end{thebibliography}

\end{document}